\newtheorem{Thm}{Theorem}[section]
\newtheorem{cor}[Thm]{Corollary}
\newtheorem{lem}[Thm]{Lemma}
\newtheorem{prop}[Thm]{Proposition}
\theoremstyle{definition}
\theoremstyle{remark}
\newtheorem{Rmk}[Thm]{Remark}
\numberwithin{equation}{section}
\begin{document}

\title{The Conley-Zehnder indices of the Reeb flow action along $S^1$-fibers over certain Orbifolds}

\author{Sokmin Hong}

\address{Department of Mathematical Sciences, Seoul National University, GwanAkRo 1, Gwanak-Gu, Seoul 08826, Korea}

\email{smhong@snu.ac.kr}

\subjclass[2010]{53D12, 57R18, 14A20}

\keywords{Conley-Zehnder index, orbifold Chern class, weighted projective space, Brieskorn polynomial}


\date{}

\dedicatory{}

\commby{}


\begin{abstract}
We prove a useful relation between the Conley-Zehnder indices of the Reeb vector flow action along periodic orbits in  prequantization bundles and the orbifold Chern class of the base symplectic orbifolds motivated by the well-known case of manifolds. We also apply this method to primary examples.
\end{abstract}

\maketitle

\section{Introduction}

The Conley-Zehnder index is a crucial concept in researching the symplectic (co)homology and other related topics. For example, Otto van Koert introduced the notion of mean Euler characteristic in \cite{O3} as
$$
\chi_m(W) \,=\,
\frac 12
\left(
\liminf_{N \to \infty}  \frac{1}{N} \sum_{i=-N}^N (-1)^i b_i(W) \,+\,
\limsup_{N \to \infty}  \frac{1}{N} \sum_{i=-N}^N (-1)^i b_i(W)
\right),
$$
with a simply connected Liouville filling $\left(W,d\lambda\right)$, and also provided a simple and useful formula
$$
\chi_m(W)=\frac{\sum_{i=1}^k (-1)^{\mu(\Sigma_{T_i})-\frac{1}{2}\dim (\Sigma_{T_i}/S^1) } \phi_{T_i;T_{i+1},\ldots T_k} \chi^{S^1}(\Sigma_{T_i})}{|\mu_P|},
$$
where $\mu_p$ is the Conley-Zehnder index of a Reeb vector flow action along a principal orbit in \cite{O2}. 

The terms `Conley-Zehnder index' and `Maslov index', however, seem to be being used for varied forms of definitions involved in paths or loops of symplectic matrices, and sometimes be used even interchangeably. So, we stick with the definitions in \cite{S} for them in the sequel. The most important property of the Conley-Zehnder index $\mu_{CZ}$ in this article is the so-called loop property, which goes as follows :
$$\mu_{CZ}\left(\phi\psi\right)=\mu_{CZ}\left(\psi\right)+2\mu\left(\phi\right),$$
where $\psi$ is a path of symplectic matrices with certain properties and $\mu\left(\phi\right)$ is the Maslov index of the loop $\phi$, i.e. $\mu\left(\phi\right)=\text{deg}\left(\rho\circ\phi\right).$ Here, the integer-valued function $\rho$ on the set of all symplectic matrices requires an amount of work to define, but it is simply the complex determinant when the symplectic matrix is as special as to be a unitary matrix.

Meanwhile, there is a well-known relation between the Conley-Zehnder indices of the Reeb vector flow along periodic orbits in prequantization bundles and the first Chern classes of base symplectic manifolds. For example, Otto van Koert showed one in \cite{O1} under the following assumption: for the class of integral symplectic manifolds $\left(Q,\omega\right)$ satisfying
\begin{enumerate}
\item[(i)] $[\omega]$ is primitive
\item[(ii)] $c_1(Q) = c [\omega]$ for some $c\in\mathbb Z$
\item[(iii)] $Q$ is simply connected.
\end{enumerate}

In Lemma 3.3 of \cite{O1}, it was shown that the Conley-Zehnder index of the Reeb vector flow action is
$$\mu(\gamma)=2c,$$
along a circle fiber $\gamma=\pi^{-1}(q)$ of the prequantization bundle $P$ over $\left(Q,\omega\right)$.

This article has almost the parallel consequence as that of Lemma 3.3 of \cite{O1} under the orbifold setting. That is, a symplectic manifold should be replaced by a symplectic orbifold, and the orbifold (co)homology or the orbifold Chern class plays a role in place of their counterparts in the usual sense.

In Section 3, we prove the main theorem in the orbifold case by a similar method used in Lemma 3.3 of \cite{O1}. Even though the ingredient idea in Lemma 3.3 of \cite{O1} works almost the same way even we switch the situation to the orbifold setting, there are lots of subtleties that need to be tackled carefully because of it being an orbifold in lieu of being just a manifold. First of all, every Reeb orbit over a symplectic orbifold does not have the identical period in contrast to the manifold case. For this reason, we will overview basic facts about general orbifolds in Section 2 to help understand those unparallel situations precisely. Despite similarities with the manifold cases, not only does the consequence of this article provide us larger scope of applications but also we can enjoy interesting features of orbifolds while dealing with orbibundles, classifying spaces and their examples and so forth.

The algebraic varieties of weighted projective spaces and their complete intersections are well-known examples as orbifolds with special properties. A great number of mathematicians have been and will be researching them using a variety of mathematical tools. In Section 4, we show that our main theorem works for those spaces and compute the actual values regarding them defined in the previous section.

Although the main theorem works most effectively for principal orbits, it is still quite useful even for non-principal ones as well, and we'll see it in the final section.

Additionally, it was shown that
\begin{equation}\label{OldBrieskorn}
\mu_P=2~\text{lcm}\{a_j\}\left(\sum_{j=0}^n\frac{1}{a_j}-1\right),
\end{equation}
in the case of the Brieskorn polynomial of
$$\sum_{j=0}^n z_j^{a_j}=0,$$
 in \cite{O2}. The reader should be warned that they used the standard complex structure of $\mathbb C^{n+1}$ where the Brieskorn manifold resides as an affine hypersurface while computing the indices. In this article, however, we can't use that method as it is because every structure involved here must be invariant under the actions in order to be orbifold objects.

\section{rudiments on orbifolds}
The easiest way to define an orbifold is by using local uniformizing systems. The exact definition can be found in \cite{ARL} together with basic properties of general orbifolds. To make a long story short, a local uniformizing system is a triple $\left(\tilde U,\Gamma,\varphi \right)$, where $\tilde U$ is a connected open subset of $\mathbb R^n$, $\Gamma$ is a finite group acting on $\tilde U$, $\varphi$ is a map from $\tilde U$ to the base space invariant under $\Gamma$. We usually denote an orbifold by $$ \mathcal X= \left(X,\mathcal U\right),$$
where $X$ is the base space and $\mathcal U$ is its orbifold atlas.

In the special case when
\begin{enumerate}
\item $\tilde U\cong \mathbb C^n$
\item $\Gamma$ is a finite subgroup of $GL(n,\mathbb C)$
\item all the embeddings are holomorphic,
\end{enumerate}
then we call such an orbifold $\mathcal X$ a complex orbifold. In this case, the base space $X$ is a complex space with quotient singularities at worst \cite[p.123]{BG}.

The orbifold canonical divisor $\mathcal K_{\mathcal X}^{orb}$ of $\mathcal X$ is a different notion from
$K_{X}$,
the canonical divisor of $X$ as a complex space.

Let us consider the following example: Suppose $S^1=\{\zeta\in\mathbb C:\left|\zeta\right|=1\}$ acts on $S^3=\{\left(z_0, z_1\right)\in\mathbb C^2:\left|z_0\right|^2+\left| z_1\right|^2=1\}$ by
$$\zeta\cdot\left(z_0, z_1\right)=\left(\zeta z_0,\zeta^m z_1\right), \qquad\zeta\in S^1,$$
where $m$ is an integer bigger than 1. Then any orbit
$$\left\{\zeta\cdot\left(z_0, z_1\right):\zeta\in S^1\right\}, \qquad z_0\neq0$$
is principal whereas it is not when $z_0$ equals zero. Actually, the quotient space $X$ under this action turns out to be a weighted projective space $\mathbb P(1,m)$ and with a suitable orbifold structure it becomes an orbifold, which we denote by $\mathcal X$ as an orbifold. By the way, $\mathbb P(1,m)$ is known to be isomorphic to $\mathbb {CP}^1$ as an algebraic variety just as any other single-dimensional weighted projective spaces.

To see its orbifold structure with more accuracy, consider $$\mathbb {CP}^1\cong S^2$$ as a complex space, and take a small open disc $D$ centered at the north pole $p=[0:1]$. Then, define $\varphi : D\longrightarrow D$ by
$$w=\varphi(z)=z^m \qquad \text{and} \qquad \Gamma\cong\mathbb Z_m.$$
Clearly on $D$, we have
$$K_X=dw \qquad \text{and} \qquad \mathcal K_{\mathcal X}^{orb}=dz,$$ which is different from
$$\varphi^\ast K_X=z^{m-1}dz.$$
They are unequal because every element in $\Gamma$ is a reflection, which doesn't produce genuine singularities (refer to Theorem 4.4.1 in \cite{BG}). In other words, even though the north pole is an orbifold singular point due to its nontrivial isotropy group, it is not singular in the usual sense because $\mathbb {CP}^1$ is very smooth all over.
Here, the term $z^{m-1}$ can be interpreted as a divisor $\left(1-1\slash m \right)[p]$, which allows us to say that its {\sl orbifold Chern number} is $2-\left(1-1\slash m\right)=1+1\slash m$.

Now take a look at the "tautological line orbibundle" $\pi:S^3\rightarrow\mathcal X$, whose fiber over $\left[z_0:z_1\right]\in\mathcal X$ is $\left\{\zeta\cdot\left(z_0, z_1\right)=\left(\zeta z_0, \zeta^m z_1\right)\in S^3:\zeta\in S^1\right\}\cong S^1$. Although it is an orbifold $S^1$-bundle since its pull-back onto the orbifold chart becomes an ordinary $S^1$-bundle that is compatible with our orbifold structure, it is not a fiber bundle in the usual sense because we may not get an $S^1$-trivialization about the north pole. Otherwise, let's suppose $\Phi : U\times S^1\cong \pi^{-1}(U)$ is an $S^1$-trivialization, where $\{\text{the north pole}\}\in U$. Then, $\Phi\left(\{x\}\times \left\{e^{2\pi i\slash m}\right\}\right)$ and $\Phi\left(\{x\}\times \left\{1\right\}\right)$ converges to the same point in the fiber over the north pole as $x$ approaches the north pole, which is absurd. However, $m$-times of this orbibundle becomes a genuine circle bundle (refer to Proposition 4.4.22 in \cite{BG}).

Next, the {\sl orbifold (co)homology groups} and {\sl orbifold homotopy groups}
 of $\mathcal X$ is defined as the usual (co)homology groups and homotopy groups of the classifying space $B\mathcal X$ of $\mathcal X$. Here, $p:B\mathcal X \rightarrow X$ is a singular fibration over $\mathcal X$, whose singular fiber over the north pole is the Eilenberg-Maclane space $K\left(\mathbb Z_m,1\right)$ and the generic fiber is the infinite sphere except over the north pole. $H^{orb}_2(\mathcal X,\mathbb Q)$ equals to $H_2(X,\mathbb Q)\cong\mathbb Q$ because $\mathbb Q$ is a field (see Corollary 4.3.8, \cite{BG}). To understand the natural projection $p_\ast: H^{orb}_2(\mathcal X,\mathbb Q) \rightarrow H_2( X,\mathbb Q)$, think of $B\mathcal X$ made by attaching the boundary of a disk with degree $m$ along the boundary of a small puncture on the north pole of the sphere. Then we can easily see that $p_\ast$ induces the division by $m$ in $\mathbb Q$.
 
Getting the orbifold (co)homology groups in $\mathbb Z$-coefficient is a little tricky. Consider the long exact sequence of the orbifold homopoty groups for an orbibundle $\mathcal P$ over $\mathcal X$ with the fiber $F$ (\cite[Theorem 4.3.18]{BG}):
\begin{equation}\label{exactseq}
\cdots\rightarrow\pi_{n}^{orb}\left(\mathcal P\right)\rightarrow\pi_{n}^{orb}\left(\mathcal X\right)\rightarrow\pi_{n-1}^{orb}\left(F\right)\rightarrow\cdots.
\end{equation}
Now that our fiber $F$ is $S^1$, it is easy to see $\pi_1(B\mathcal X)=0,\pi_2(B\mathcal X)=\mathbb Z$, and hence $H_{1}(B\mathcal X)=0,H_{2}(B\mathcal X)=\mathbb Z$. Now, let $U$ be an open subset in $B\mathcal X$ that is the upper disk containing the north pole. i.e. it is a singular fiber bundle over a disk whose generic fiber is contractible and singular fiber is $K(\mathbb Z_m,1)$. Let $V$ be the lower disk. i.e. it is the trivial bundle over a disk with a contractible fiber, and hence a contractible space. Then $U\cap V$ is a bundle over $S^1$ with a contractible fiber, and hence homotopy equivalent to $S^1$.
Therefore,
$$\begin{aligned}
&H_\ast(U) = H_\ast(K(\mathbb Z_m,1)),\\
&H_\ast(V) = H_\ast(\{pt\}),\\
&H_\ast(U\cap V) = H_\ast(S^1).
\end{aligned}$$
From the Mayer-Vietoris sequence, we conclude that
$$H_q^{orb}(\mathcal X) = \begin{cases}\mathbb Z& q=0,2\\\mathbb Z_m&q>1 \text{ odd}\\0&q=1\text{ or }q>2 \text{ even}\end{cases}.$$
Similarly, we can get the cohomology groups :
$$H^q_{orb}(\mathcal X) = \begin{cases}\mathbb Z& q=0,2\\\mathbb Z_m&q>2 \text{ even}\\0&q>0 \text{ odd}\end{cases}.$$

There's one last thing to point out about orbifolds. On a prequantization bundle over a manifold, every Reeb flow rotates with the same period. i.e. The $S^1$-action is free. But this is not the case over an orbifold, because the $S^1$-action is only locally free. So, for a prequantization bundle $\left(M, \xi,\eta\right)$ over an orbifold, we call a periodic orbit $\gamma_P$ of $\xi$ is {\sl principal} if $\gamma_P$ has the longest period among all the periodic orbits of $\xi$ when it exists.

\section{The main theorem}

The main theorem will be proved in this section. According to Theorem 7.1.6 in \cite{BG}, an integral almost K\"ahler orbifold admits a circle orbibundle generated by its symplectic form, whose total space becomes a K-contact orbifold. With counting the action by the Reeb vector flow along a fiber as a path of symplectomorphisms, we will compute its Conley-Zehnder index. The proof will be an orbifold version analogous to \cite{O1} with additional considerations.
\begin{Thm}\label{mainthm}
Let $\left(\mathcal Z, \omega \right)$ be a Hodge orbifold, so that it admits an $S^1$-orbibundle $\pi:M\rightarrow\mathcal Z$ whose total space $M$ has a K-contact structure $(\xi,\eta,\Phi,g)$ where $d\eta=\pi^\ast\omega$.
Further, if

\begin{enumerate}
\item[(i)] $c_1^{orb}\left(T\mathcal Z\right)=-b_{\mathcal Z}\left[\omega\right]\in H^2\left(\mathcal Z,\mathbb Q\right)$
for some integer $b_{\mathcal Z}\in\mathbb Z$
\item[(ii)] $\pi_1^{orb}\left(\mathcal Z\right)=0$
\item[(iii)] $M$ is a manifold,
\end{enumerate}
then the Conley-Zehnder index $\mu_{CZ}$ of the Reeb vector flow action along an orbit $\gamma$ wound $\left|\Gamma_q\right|$-times is
$$\mu_{CZ}\left(\left|\Gamma_q\right|\cdot\gamma\right)=2b_{\mathcal Z},$$
where $q$ is the image of $\gamma$ under $\pi$ and $\Gamma_q$ is the isotropy group at $q$. For a principal orbit, we know that $\left|\Gamma_q\right|=1$ and denote by $\mu_p\left(\mathcal Z\right)$ its Conley-Zehner index.
\end{Thm}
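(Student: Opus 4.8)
The plan is to imitate the proof of Lemma~3.3 of \cite{O1}, which rests on comparing two trivializations of the contact distribution along the orbit, and to isolate the points at which the orbifold structure forces extra care. Write $\mathcal D:=\ker\eta\subset TM$ for the contact distribution, equipped with the symplectic form $d\eta|_{\mathcal D}$ and the compatible complex structure $\Phi|_{\mathcal D}$; since $d\eta=\pi^\ast\omega$, the differential $d\pi$ restricts to a bundle isomorphism $\mathcal D\xrightarrow{\ \sim\ }\pi^\ast T\mathcal Z$ that is symplectic (and complex linear). The Reeb field $\xi$ generates the locally free $S^1=\mathbb R/T_p\mathbb Z$-action whose orbits are the fibres of $\pi$, so its flow $\phi^t$ satisfies $\phi^{T_p}=\mathrm{id}_M$ and $\pi\circ\phi^t=\pi$; because the structure is K-contact, $\xi$ is Killing and $\mathcal L_\xi\Phi=0$, hence $d\phi^t|_{\mathcal D}$ is a path of unitary automorphisms of $\mathcal D$ covering $\phi^t$, and it is the Conley--Zehnder index of this path (in the sense of \cite{S}) that we must compute.

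First I would set up the bookkeeping of periods. The orbit through a point of $\pi^{-1}(q)$ has period $T_p/|\Gamma_q|$, so $|\Gamma_q|\cdot\gamma$ is precisely this orbit traversed for the full time $T_p$; since the orbit maps of the $S^1$-action form a continuous family, $|\Gamma_q|\cdot\gamma$ is freely homotopic to a principal fibre. Feeding $F=S^1$ together with hypotheses (ii) and (iii) into the orbibundle homotopy exact sequence \eqref{exactseq}, and identifying the connecting homomorphism $\pi_2^{orb}(\mathcal Z)\to\pi_1(S^1)$ with pairing against the Euler class $[\omega]$, one gets $\pi_1(M)=0$ (here one uses, as in condition (i) of \cite{O1}, that $[\omega]$ is primitive); in particular $|\Gamma_q|\cdot\gamma$ bounds a disk $D\subset M$, and I would fix the trivialization of $\mathcal D|_{|\Gamma_q|\cdot\gamma}$ obtained from $D$. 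The resulting index is independent of $D$ because $c_1(\mathcal D)=\pi^\ast c_1^{orb}(T\mathcal Z)=-b_{\mathcal Z}\,\pi^\ast[\omega]=-b_{\mathcal Z}[d\eta]=0$ in $H^2(M;\mathbb Q)$, so it vanishes on every spherical class.

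The computation itself compares two trivializations of $\mathcal D$ along $|\Gamma_q|\cdot\gamma$. Since $\pi(|\Gamma_q|\cdot\gamma)=\{q\}$, the bundle $\mathcal D|_{|\Gamma_q|\cdot\gamma}\cong\pi^\ast(T_q\mathcal Z)$ is canonically trivial, with the constant frame $\Theta$ pulled back from $T_q\mathcal Z$; and since $\pi\circ\phi^t=\pi$ gives $d\pi\circ d\phi^t=d\pi$, the linearized flow is the identity in the frame $\Theta$, so the associated path of symplectic matrices is constant and has index $0$. On the other hand, the frame $\Theta_D$ coming from $D$ amounts to trivializing $T\mathcal Z$ over the sphere $S:=\pi(D)\subset\mathcal Z$ obtained by collapsing $\partial D$; over $|\Gamma_q|\cdot\gamma=\partial D$ the two frames differ by a loop in $U(n)\subset\mathrm{Sp}(2n)$ whose Maslov index is the obstruction to extending $\Theta$ across $D$, namely $\langle c_1^{orb}(T\mathcal Z),[S]\rangle=-b_{\mathcal Z}\,\langle[\omega],[S]\rangle$. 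As $\partial D$ is (homotopic to) a single principal fibre and $[\omega]$ is the primitive Euler class of $\pi$, the connecting map sends $[S]$ to a generator of $\pi_1(S^1)$, i.e.\ $\langle[\omega],[S]\rangle=\pm1$. The loop property $\mu_{CZ}(\phi\psi)=\mu_{CZ}(\psi)+2\mu(\phi)$ then yields $\mu_{CZ}(|\Gamma_q|\cdot\gamma)=0+2(\mp b_{\mathcal Z})$, which with the orientation conventions of \cite{S} is $2b_{\mathcal Z}$; on principal orbits $|\Gamma_q|=1$.

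The mechanics copied from \cite{O1} are routine; the real work, and the main obstacle, is the orbifold bookkeeping. One must check that near the singular fibre $\pi^{-1}(q)$ the map $d\pi$ and the frame $\Theta$ are genuinely smooth and compatible, which is where hypothesis (iii) enters through the local model $(\widetilde U\times S^1)/\Gamma_q$ with $\Gamma_q$ necessarily cyclic; one must verify that the connecting homomorphism in \eqref{exactseq} really is given by pairing against $[\omega]$ and that $S=\pi(D)$ is the ``fundamental'' sphere of the orbibundle even though $\pi(\partial D)$ is a single point; and one must pin down, in this Morse--Bott (transversally nondegenerate) situation, the convention under which the constant path has index $0$ so that the loop property applies verbatim. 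These are exactly the subtleties the preceding section flags when it warns that ``every Reeb orbit over a symplectic orbifold does not have the identical period''.
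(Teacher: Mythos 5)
Your overall strategy---compare the Reeb-flow trivialization of $\mathcal D$ along the orbit with a capping trivialization, and convert the resulting Maslov loop into a pairing with $c_1^{orb}\left(T\mathcal Z\right)$ via the loop property---is the same as the paper's, but two steps are genuinely unjustified, and they are precisely the orbifold subtleties the theorem is about. First, you deduce $\pi_1(M)=0$ and hence the existence of the capping disk $D$ from the primitivity of $\left[\omega\right]$, ``as in condition (i) of \cite{O1}''; but primitivity is \emph{not} among the hypotheses (i)--(iii) here. The paper avoids assuming it: it chooses a sphere $\iota: S\rightarrow B\mathcal Z$ with $\left<\left[p^\ast\omega\right],\iota(S)\right>=k$ for whatever positive integer $k$ is available, caps $\tilde\gamma_{\tilde q}$ with a degree-$k$ upper disk built from a section of $\mathcal O(k)$ plus a trivial lower cap, and obtains $k\cdot\mu_{CZ}=2k\,b_{\mathcal Z}$ before dividing by $k$. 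As written, your argument only covers the case $k=1$.

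Second, and more seriously, your normalization $\left<\left[\omega\right],[S]\right>=\pm1$ for $S=\pi(D)$ a sphere in the underlying space of $\mathcal Z$ is false in general: $c_1^{orb}\left(T\mathcal Z\right)$ and $\left[\omega\right]$ live in $H^2_{orb}\left(\mathcal Z,\mathbb Q\right)$, and their pairing with ordinary integral $2$-cycles of the underlying space is only rational (Lemma 4.2 of this paper gives $\left<\omega,S^2\right>=-1/\left\|{\bf w}\right\|$ for weighted projective spaces, and Lemma 4.3 shows $p^\ast$ rescales by $\left\|{\bf w}\right\|$). To obtain an integral pairing one must pass to the classifying space $B\mathcal Z$, where $H_2^{orb}\left(\mathcal Z,\mathbb Z\right)$ and the genuine bundles $\tilde M$, $\tilde T\mathcal Z$ live; this passage is the core of the paper's proof, and it is also where the factor $\left|\Gamma_q\right|$ actually arises (as the degree of $\tilde p:\tilde M\rightarrow M$ over the fiber $\pi^{-1}(q)$, giving $\mu_{CZ}\left(\tilde\gamma_{\tilde q}\right)=\mu_{CZ}\left(\left|\Gamma_q\right|\cdot\gamma_q\right)$). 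In your write-up $\left|\Gamma_q\right|$ enters only through the assertion that $\left|\Gamma_q\right|\cdot\gamma$ may be replaced by a freely homotopic principal fiber, but the Conley--Zehnder index is not a free-homotopy invariant of the underlying loop: it depends on the linearized flow along that specific orbit. Your closing paragraph correctly lists these verifications as ``the real work,'' but they are not optional bookkeeping; without them the computation does not produce $2b_{\mathcal Z}$.
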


\begin{proof}
Before proceeding to prove the theorem, let us clarify what the path $\phi$ of symplectic matrices is whose Conley-Zehnder index we seek for.
For a point $x$ in $\mathcal Z$, let $\gamma_x$ be the $S^1$-fiber on $x$ or the periodic Reeb orbit, and for each point $t\in\gamma_x$, let $Q^x_t$ be the horizontal space at $t$. First, fix any point in $\gamma_x$ as 0, and take any symplectomorphism $\Phi:\mathbb C^n \rightarrow Q_0^x$. Now let $\alpha^x(t):Q^x_0\longrightarrow Q^x_t$ be a symplectomorphism induced by the Reeb flow action from 0 to $t$ and $\beta^x(t)=\left(d\pi|_{Q^x_t}\right)^{-1}\circ d\pi|_{Q^x_0}:Q^x_0\longrightarrow Q^x_t$. Then, $\phi(t)$ is defined to be $\Phi^{-1}\circ\alpha^x(t)^{-1}\circ\beta^x(t)\circ\Phi$. Of course, different $\Phi$ may change the path $\phi$, but keeps the same Conley-Zehnder index. Also, if the orbit $\gamma$ happens to be principal, then $\phi$ becomes a loop.

Write $B\mathcal Z\xrightarrow{p}\mathcal Z$ for the classifying space of $\mathcal Z$. Choose points $q\in\mathcal Z$, $\tilde q\in T\mathcal Z$ such that $p\left(\tilde q\right)=q$ and write the periodic Reeb orbit over $q$ by $\gamma_q$. As shown in \cite{O1}, the Conley-Zehnder index along $\gamma_q$ can be computed by considering the Maslov index along $\gamma_q$ of the contact structure $\mathcal D$ of $\eta$ as a symplectic bundle.

Due to Theorem 4.3.11 in \cite{BG}, we can construct a $U(1)$-bundle $\tilde M\xrightarrow{\tilde\pi}B\mathcal Z$ over $B\mathcal Z$ corresponding to $M$, as in the following diagram
$$\xymatrix{\tilde M \ar[d]_{\tilde\pi}\ar[rr]^{\tilde p} && M \ar[d]^{\pi} \\ B\mathcal Z \ar[rr]_p&& \mathcal Z.}$$ 
Also, consider the vector bundle $\tilde T\mathcal Z$ over $B\mathcal Z$ corresponding to the tangent bundle $T\mathcal Z$ over $\mathcal Z$, as in the diagram
$$\xymatrix{\tilde T\mathcal Z \ar[d] && T\mathcal Z \ar[d] \\ B\mathcal Z \ar[rr]_p && \mathcal Z.}$$
Then, $\tilde M, \tilde T\mathcal Z$ are generic bundles over $B\mathcal Z$ and moreover, $p^\ast c_1^{orb}\left(T\mathcal Z\right)\in H_{orb}^2\left(\mathcal Z,\mathbb Z\right)$ is the first Chern class of $\tilde T\mathcal Z$.

Due to the assumption that
$\pi_1^{orb}\left(\mathcal Z\right)=0$,
we know that
$$\pi_2^{orb}\left(\mathcal Z\right)=H_2^{orb}\left(\mathcal Z,\mathbb Z\right) \qquad \text{and} \qquad
H_{orb}^2\left(\mathcal Z,\mathbb Z\right)=H_2^{orb}\left(\mathcal Z,\mathbb Z\right)^\ast$$
by the universal coefficient theorem. Considering a primitive element that generates $[p^\ast\omega]$ and $c_1^{orb}\left(T\mathcal Z\right)$, we may pick a sphere $$\iota : S\rightarrow B\mathcal Z,$$ in $H_2\left(B\mathcal Z,\mathbb Z\right)$ which is
$$\left<\left[p^\ast\omega \right],\iota(S)\right>=k\in\mathbb Z^+.$$
Such $S$ does exist because the Betti part of $H_2\left(B\mathcal Z,\mathbb Z\right)$ is finite dimensional \cite[Proposition 7.2.3]{BG}.

Consider the pull-back bundle
$$P=\xymatrix{\iota^\ast\tilde M \ar[d]_{\tilde\pi} \\ S}$$
over $S$. Note that $\left(\iota\tilde p\right)^\ast\eta$ gives it a connection and in order to get its curvature, compute
$$
\tilde\pi^\ast\iota^\ast p^\ast\omega=\iota^\ast\tilde\pi^\ast p^\ast\omega
=\iota^\ast\tilde p^\ast \pi^\ast \omega
=\iota^\ast\tilde p^\ast d\eta.
$$
Therefore, the first Chern class of $P$ is
$$c_1(P)=\left[\iota^\ast p^\ast\omega\right].$$
Now, define the complex line bundle $L$ by
$$L=P\times_{S^1}\mathbb C$$
over $S$, which is isomorphic to $\mathcal O(k)$ because of
$$\left<c_1(P),S\right>=\left<\left[\iota^\ast p^\ast\omega \right],S\right>=\left<\left[p^\ast\omega \right],\iota(S)\right>=k.$$ 
Hence we can take a section $\sigma : S \longrightarrow L$ that vanishes only at one point $\tilde q$ with multiplicity $k$ and then extend it to a continuous map
$$\bar\sigma : D^2\longrightarrow P.$$

Since $\pi$ maps $\mathcal D$ symplectomorphically to $T_q\mathcal Z$ on every point of $\pi^{-1}(q)$, $\tilde p^\ast\mathcal D$ is mapped symplectomorphically to $\tilde T_{\tilde q}\mathcal Z$ via $\tilde\pi$ on every point of $\tilde\pi^{-1}\left(\tilde q\right)$. Therefore, the Maslov index along $\gamma_q$ of $\mathcal D$ is the same as the Maslov index along $\tilde\gamma_{\tilde q}=\tilde p^{-1}\left(\gamma_q\right)$ of $\tilde p^\ast\mathcal D$, so that the technique introduced in \cite{O1} for manifolds is still effective. 

To be more precise, by considering  trivializations of $\iota^{\ast}\tilde T\mathcal Z$ over a splitting $S=S\backslash\left\{\tilde q\right\}\cup\left\{\tilde q\right\}$ and its overlap map, we will relate the Maslov index along $\tilde\gamma_{\tilde q}$ with $p^\ast c_1^{orb}\left(T\mathcal Z\right)$ as done in \S2.6 of \cite{MS}. Let us assume $k=1$ for a moment. Since $S\backslash\left\{\tilde q\right\}$ is contractible, we may choose trivializations $\Phi:S\backslash\left\{\tilde q\right\}\times\mathbb C^n\cong\tilde T\mathcal Z|_ {S\backslash\left\{\tilde q\right\}}$, and $\Psi:S\backslash\left\{\tilde q\right\}\times S^1\cong P|_{S\backslash\left\{\tilde q\right\}}$. Write $\Psi^{-1}\left(\bar\sigma\left(\tilde x\right)\right)=\{\tilde x\}\times\left\{t_{\tilde x}\right\}\in S\backslash\left\{\tilde q\right\}\times S^1$, and identify $\Theta_{\tilde x}:\tilde T_{\tilde x}Z \cong Q^{x}_{0}$ through $d\pi\circ p$, for $\tilde x\in S\backslash\left\{\tilde q\right\}$, $x=p\left(\tilde x\right)$. 
Then by covering all $\tilde x\in S\backslash\{\tilde q\}$, 
$$\left(\tilde p^\ast\right)^{-1}\circ\alpha^{x}\left(t_{\tilde x}\right)\circ\Theta_{\tilde x}\circ\Phi$$ gives a trivialization of $\tilde p^\ast\mathcal D$ over $\bar\sigma\left(S\backslash\left\{\tilde q\right\}\right)$.

For the part of $\tilde T_{\tilde q}Z$, the horizontal lift along $\gamma_{q}$ will work. Then, the overlap map with this splitting and trivializations is exactly the path of symplectomorphisms we pursue. In fact, this description requires a bit more precise verifications, which can be found in the proof of Lemma 3.3 of \cite {O1}. 

For general $k$, have the boundary of $D^2$ map to the fiber $\tilde\pi^{-1}\left(\tilde q\right)$ winding it $k$ times, so that $\bar\sigma(D^2)$ plays a roll of the (upper) capping disk along $\tilde\gamma_{\tilde q}$ with degree $k$. Also, attach the trivial symplectic vector bundle over a disk with the boundary sphere winding only once along $\tilde\gamma_{\tilde q}$ for the (lower) capping disk to make a sphere. In the end, by the nature of the Maslov index, we have
\begin{align*}
k\cdot\mu_{CZ}\left(\tilde\gamma_{\tilde q}\right)=&2\left<c_1\left(\iota^\ast\tilde T\mathcal Z\right),S\right>\\
=&2\left<\iota^\ast c_1\left(\tilde T\mathcal Z\right),S\right>\\
=&2\left<c_1\left(\tilde T\mathcal Z\right),\iota(S)\right>\\
=&2\left<p^\ast c_1^{orb}\left(T\mathcal Z\right),\iota(S)\right>\\
=&2b_{\mathcal Z}\cdot\left<\left[p^\ast\omega \right],\iota(S)\right>\\
=&2k\cdot b_{\mathcal Z}.
\end{align*}

Now that the degree of $\tilde p$ is equal to $\left|\Gamma_q\right|$, the order of the isotropy group of $q$ in the base space, we get 
$\mu_{CZ}\left(\tilde\gamma_{\tilde q}\right)=\mu_{CZ}\left(\left|\Gamma_q\right|\cdot\gamma_{q}\right),$
which lead us to the conclusion.
\end{proof}

\begin{Rmk}
As for the condition (ii), recall (\ref{exactseq}) the exact sequence of orbifold homotopy groups. If $M$ happens to be a simply connected manifold, we have
$$\pi_{1}(M)=\pi_{1}^{orb}(M)=0,$$
and hence it follows that
$$\pi_{1}^{orb}(\mathcal Z)=0.$$
\end{Rmk}

\section{The weighted projective spaces and their complete intersections}

We denote by $\mathbb P\left({\bf w}\right)$ the weighted projective space with weights
$${\bf w}=\left(w_0,w_1,\cdots,w_n\right),$$ and we assume additionally
$$\gcd\left(w_0,w_1,\cdots,w_n\right)=1.$$
Let us use the following notations to make it easy in dealing with weighted projective spaces:
$$\left|{\bf w}\right|=\sum_{j=0}^{n}w_j,$$
$$\left\|{\bf w}\right\|=\prod_{j=0}^{n}w_j,$$
$$d_j=\gcd\left(w_0,\cdots,\widehat{w_j},\cdots,w_n\right),$$
$$e_j=\text{lcm}\left(d_0,\cdots,\widehat{d_j},\cdots,d_n\right),$$
$$a_{\bf w}=\text{lcm}\left(d_0,\cdots,d_n\right),$$
$${\bf\bar w}=\left(\frac{w_0}{e_0},\cdots,\frac{w_n}{e_n}\right).$$
Note that
${\bf w}={\bf\bar w}$ if and only if $d_j=1$ for all $j$ or if and only if $a_{\bf w}=1$.
We call $\mathbb P\left({\bf w}\right)$ is {\sl well-formed} in those cases.

In fact, an algebraic variety $\mathbb P\left({\bf w}\right)$ can have different orbifold structures, but in this article, we only consider the structure as a quotient space where $S^1$ acts locally free on $S^{2n+1}$. This well-known $S^1$-action is the generalization of the one introduced in Section 2:
$$\zeta\cdot\left(z_0, z_1,\cdots,z_n\right)\mapsto\left(\zeta^{w_0} z_0,\zeta^{w_1} z_1,\cdots,\zeta^{w_n}z_n\right), \zeta\in S^1.$$
The orbifold chart to define this orbifold structure can be found in p.143 \cite{M}. Roughly;
For $j\in\{0,\cdots,n\}$, let $U_j$ be the subset of $\mathbb P\left({\bf w}\right)$ such that
$$U_j:=\left\{\left[z_0:\cdots:z_n\right]~|~z_j\neq0\right\},$$
and $\tilde U_j$ the set of points in $\mathbb C^{n+1}\backslash\{\bf 0\}$ such that $y_j=1$. Then, $\tilde U_j$ is the local uniformizing chart with
$$\varphi_j : \left(y_0,\cdots,1_j,\cdots,y_n\right) \in \tilde U_j \mapsto \left[y_0:\cdots:1_j:\cdots:y_n\right] \in U_j.$$
Its uniformizing group $\Gamma_j$ is
$$\mu_{w_j}\cong\mathbb Z_{w_j},$$
where $\zeta\in\Gamma_j$ acts on $\tilde U_j$ by
$$\zeta\cdot\left(y_0,\cdots,1_j,\cdots,y_n\right)\mapsto\left(\zeta^{w_0}y_0,\cdots,1_j,\cdots,\zeta^{w_n}y_n\right).$$

We don't need to mention embeddings between local uniformizing charts, because they are not used in the sequel. 
We denote $\mathbb P\left({\bf w}\right)$ with this orbifold structure by $\mathcal P\left({\bf w}\right)$.

\begin{Rmk}
There is another orbifold structure in $\mathbb P\left({\bf w}\right)$:
The finite set $G_{\bf w}=\mathbb Z_{w_0}\times\cdots\times\mathbb Z_{w_n}$ acts on $\mathbb C\mathbb P^n$ to produce a developable complex orbifold $\mathbb {CP}^n\slash G_{\bf w}$. Refer to \cite{BR} for this orbifold structure.
\end{Rmk}

Actually, $\mathcal P({\bf w})$ is a symplectic orbifold: Consider the contact form in $\mathbb C^{n+1}$
$$\eta_{\bf w}=\frac{(2\pi)^{-1}\eta_0}{\sum_{j=0}^{n}w_j\left(\left(x_j\right)^2+(y_j)^2\right)},$$
where $$\eta_0=\frac{i}{2}\sum_{j=0}^{n}\left(z_jd\bar z_j-\bar z_jdz_j\right)=\sum_{j=0}^{n}\left(x_jdy_j-y_jdx_j\right).$$
Its Reeb vector field is
$$\xi_{\bf w}=2\pi i\sum_{j=0}^{n} w_j\left(z_j\frac{\partial}{\partial z_j}-\bar z_j\frac{\partial}{\partial \bar z_j}\right)=2\pi\sum_{j=0}^{n} w_j\left(x_j\frac{\partial}{\partial y_j}-y_j\frac{\partial}{\partial x_j}\right),$$
whose integral curve is
$$\left(e^{2\pi w_0it}z_0,\cdots, e^{2\pi w_jit}z_j,\cdots e^{2\pi w_nit}z_n\right).$$
Since $\eta_{\bf w}$ is invariant under $S^1$-action that defines $\mathcal P({\bf w})$, we can use  $d\eta_{\bf w}$ as an orbifold symplectic form $\omega$ for it.

Now, we show $\mathcal P\left({\bf w}\right)$ fits for Theorem \ref{mainthm} and get relevant values.

\begin{lem}
The above symplectic form $[\omega]$ amounts to $-1 \slash \left\|{\bf w}\right\|$ in $H^2\left(\mathbb P\left({\bf w}\right),\mathbb Q\right)\cong\mathbb Q$.
\end{lem}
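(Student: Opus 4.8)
The plan is to realize $[\omega]$ as (minus) a first orbifold Chern class and then read off the number. Since $\gcd(w_0,\dots ,w_n)=1$, a generic Reeb orbit of $\xi_{\mathbf w}$ is exactly one‑periodic, so $\eta_{\mathbf w}$ is an $S^1$‑invariant connection $1$‑form with $\eta_{\mathbf w}(\xi_{\mathbf w})=1$ for the circle action defining $\mathcal P(\mathbf w)$, and $d\eta_{\mathbf w}=\pi^\ast\omega$ is its curvature. By orbifold Chern--Weil, $[\omega]$ therefore represents, up to the usual sign, the first orbifold Chern class of the circle orbibundle $\pi\colon S^{2n+1}\to\mathcal P(\mathbf w)$; and this orbibundle is the unit circle bundle of the tautological line orbibundle $\mathcal O_{\mathbb P(\mathbf w)}(-1)$, because the weighted $S^1$‑action on $S^{2n+1}$ is the restriction of scalar multiplication on $\mathcal O(-1)$. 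Hence $[\omega]=-c_1^{orb}\!\bigl(\mathcal O_{\mathbb P(\mathbf w)}(1)\bigr)$ in $H^2(\mathbb P(\mathbf w),\mathbb Q)$, the sign being fixed by the orientation convention under which the claimed value is negative.

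It then remains to evaluate $c_1^{orb}(\mathcal O_{\mathbb P(\mathbf w)}(1))$ under the identification $H^2(\mathbb P(\mathbf w),\mathbb Q)\cong\mathbb Q$ being used (the natural one, whose unit is $c_1$ of the genuine ample line bundle $\mathcal O_{\mathbb P(\mathbf w)}(\|\mathbf w\|)$, equivalently the one with $\int_{\mathbb P(\mathbf w)}(\text{unit})^n=\|\mathbf w\|^{\,n-1}$); this is the standard weighted‑projective intersection number $\int_{\mathbb P(\mathbf w)}c_1^{orb}(\mathcal O(1))^n=1/\|\mathbf w\|$, whence $[\omega]$ amounts to $-1/\|\mathbf w\|$. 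I would verify the number from scratch in two ways. For $n=1$ (so $\gcd(w_0,w_1)=1$ and the base is $S^2$): since the generic‑fiber integral of $\eta_{\mathbf w}$ is $1$, one has $\int_{S^2}\omega=\int_{S^3}\eta_{\mathbf w}\wedge d\eta_{\mathbf w}$, and writing $z_0=\cos\psi\,e^{i\theta_0}$, $z_1=\sin\psi\,e^{i\theta_1}$ a short wedge computation reduces this to $-\int_0^1 du/(w_0+(w_1-w_0)u)^2=-1/(w_0w_1)$. For general $n$, pull back along the degree‑$\|\mathbf w\|$ branched cover $\bar f\colon\mathbb{CP}^n\to\mathbb P(\mathbf w)$, $[z_0:\dots :z_n]\mapsto[z_0^{w_0}:\dots :z_n^{w_n}]$: the accompanying $S^1$‑equivariant map $(z_j)\mapsto(z_j^{w_j})$ exhibits the Hopf bundle as $\bar f^\ast$ of the tautological orbibundle, so $\bar f^\ast[\omega]=-h$ for a generator $h$ of $H^2(\mathbb{CP}^n,\mathbb Z)$, and $\|\mathbf w\|\int_{\mathbb P(\mathbf w)}[\omega]^n=\int_{\mathbb{CP}^n}(\bar f^\ast[\omega])^n=(-1)^n$, which pins down $[\omega]=-1/\|\mathbf w\|$.

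The main obstacle is not any one calculation but the orbifold bookkeeping around it: one must check that $\eta_{\mathbf w}$ is a connection for the \emph{period‑one} action (Reeb orbits over the singular locus are genuinely shorter, which is exactly why $\gcd(w_j)=1$ is needed), that Chern--Weil is being run over the orbifold $\mathcal P(\mathbf w)$ so that $c_1^{orb}$ is the in‑general fractional orbifold Chern class inside $H^2$ of the \emph{underlying variety} and not the ordinary one, and that the normalization of $H^2(\mathbb P(\mathbf w),\mathbb Q)\cong\mathbb Q$ is the one compatible with the $n=1$ integral above. A subtle point worth isolating is that when $\mathbb P(\mathbf w)$ is not well‑formed it is isomorphic as a variety to a well‑formed $\mathbb P(\bar{\mathbf w})$ but carries a genuinely different symplectic form, so the answer must come out as $1/\|\mathbf w\|$ with the original weights — as it does from the branched‑cover argument, whose degree is $\prod_j w_j$ — and not with the reduced ones.
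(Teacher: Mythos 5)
Your argument is correct and lands on the same value, but it takes a genuinely different route from the paper's. The paper stays at the level of differential forms: it restricts to the weighted projective line $\mathbb P(w_0,w_1)\subset\mathbb P({\bf w})$, integrates $\iota_w^\ast d\eta_{\bf w}$ explicitly in an affine chart (getting $-1/w_0$), corrects by the order $w_1/\gcd(w_0,w_1)$ of the uniformizing group, and then divides by the degree $\|{\bf w}\|\gcd(w_0,w_1)/(w_0w_1)$ of the inclusion, read off from the commutative square involving $f_{\bf w}$. You instead identify $[\omega]$ once and for all with $c_1^{orb}\bigl(\mathcal O_{\mathbb P({\bf w})}(-1)\bigr)$ by Chern--Weil for the connection $\eta_{\bf w}$ --- an identification the paper itself invokes without proof in Proposition~\ref{wpsprop}, so your route makes that later step explicit --- and then evaluate by pulling back along the full degree-$\|{\bf w}\|$ cover $f_{\bf w}:\mathbb{CP}^n\to\mathbb P({\bf w})$, using the equivariant map $(z_j)\mapsto(z_j^{w_j})$ to recognize the pullback as the Hopf bundle; your $n=1$ integral is essentially the paper's chart computation in disguise. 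What your version buys is conceptual clarity and independence from the choice of a $2$-dimensional slice; what the paper's buys is an explicit pairing with a concrete $2$-cycle, which fixes the normalization of $H^2(\mathbb P({\bf w}),\mathbb Q)\cong\mathbb Q$ on the spot. That normalization is the one soft spot in your write-up: the top-intersection number only determines the degree-$2$ class after you declare the unit to be $\|{\bf w}\|\,c_1^{orb}(\mathcal O(1))$ (dually, $f_{{\bf w}\ast}[\text{line}]/\|{\bf w}\|$ in homology), and your side remark that this unit is the integral generator with $\int u^n=\|{\bf w}\|^{n-1}$ fails for non-well-formed weights (e.g.\ $\mathbb P(1,2,2)\cong\mathbb P^2$, where the integral generator $h$ has $\int h^2=1$, not $4$). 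Since the paper's own proof uses exactly the same non-integral normalization when it divides by $\deg\iota_w$, your conclusion agrees with the lemma as intended; this is an ambiguity in the statement rather than a gap in your argument, but it deserves one explicit sentence fixing the isomorphism $H^2(\mathbb P({\bf w}),\mathbb Q)\cong\mathbb Q$ before the final step.
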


\begin{proof}
Consider the map
$$f_{\bf w} : \left[z_0;\cdots;z_n\right] \in \mathbb P^n \mapsto \left[z_0^{w_0};\cdots;z_n^{w_n}\right] \in \mathbb P\left({\bf w}\right),$$
whose degree is $\left\|{\bf w}\right\|\slash\gcd\{w_j\}$ in general \cite[Remark 3.5]{M}.
By the following commutative diagram
$$\xymatrix{\mathbb P^2\ar[d]_{f_{\left(w_0,w_1\right)}}\ar[rr]^\iota&&\mathbb P^n \ar[d]^{f_{\bf w}}\\ \mathbb P\left(w_0,w_1\right)\ar[rr]_{\iota_w}&&\mathbb P\left({\bf w}\right),}$$
where $$\iota,\iota_w:\left[z_0;z_1\right]\mapsto\left[z_0;z_1;0;\cdots;0\right],$$ and $\deg\iota=1,~\deg f_{(w_0,w_1)}=w_0w_1\slash\gcd\left(w_0,w_1\right),$ we see that $\deg\iota_w=\frac{\left\|{\bf w}\right\|\cdot\gcd\left(w_0,w_1\right)}{w_0w_1}$.
Also,
$$\iota_w^\ast\eta_{\bf w}=\frac{i}{4\pi}\frac{\bar z_0dz_0-z_0d\bar z_0+\bar z_1dz_1-z_1d\bar z_1}{w_0|z_0|^2+w_1|z_1|^2}.$$
Now, use the chart $z\mapsto\left[z;1;0\cdots;0\right]$ in $\mathbb P\left(w_0,w_1\right)$, so that
$$\begin{aligned}
\iota_w^\ast d\eta_{\bf w}=&-\frac{i}{4\pi}\frac{\left(2w_1dz\wedge d\bar z\right)}{\left(w_0 |z|^2+w_1\right)^2}\\
=&-\frac{i}{2\pi}\frac{\left(-2i w_1dx\wedge dy\right)}{\left(w_0 |z|^2+w_1\right)^2}\\
=&-\frac{1}{\pi}\frac{\left(w_1rdr\wedge d\theta\right)}{\left(w_0 r^2+w_1\right)^2}.
\end{aligned}$$
The integral is, then
$$-\frac{1}{\pi}\int_{0}^{2\pi}\int_{0}^{\infty}\frac{w_1r}{\left(w_0r^2+w_1\right)^2}drd\theta=-\frac{1}{2\pi}\int_{0}^{2\pi}\int_{w_1}^{\infty}\frac{w_1}{w_0s^2}dsd\theta
=-\frac{1}{w_0}.$$
Since the uniformizing group of this chart is of order $w_1\slash\gcd\left(w_0,w_1\right)$, the integral value should be $-\frac{\gcd\left(w_0,w_1\right)}{w_0w_1}$.
By factoring in the degree of $\iota_{w}$, we get
$$\left<\omega,S^2\right>=-\frac{1}{\left\|{\bf w}\right\|}.$$
\end{proof}

\begin{lem}
The isomorphism $$p^\ast:H^2\left(\mathbb P\left({\bf w}\right);\mathbb Q\right)\longrightarrow H^2\left(B\mathcal P\left({\bf w}\right);\mathbb Q\right)$$ induced by the classifying space map $p:B\mathcal P\left({\bf w}\right)\rightarrow\mathcal P\left({\bf w}\right)$ is the multiplication by $\left\|{\bf w}\right\|$.
\end{lem}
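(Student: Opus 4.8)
The plan is to work with an explicit model of the classifying space. Since $\mathcal P\left({\bf w}\right)$ is the quotient orbifold $\left[S^{2n+1}/S^1\right]$ for the weight action, $B\mathcal P\left({\bf w}\right)$ may be taken to be the Borel construction $ES^1\times_{S^1}S^{2n+1}$, which carries both the classifying map $p:B\mathcal P\left({\bf w}\right)\to\mathbb P\left({\bf w}\right)$ (with rationally acyclic fibers $B\Gamma_q$, which is why $p^\ast$ is a rational isomorphism, cf. Section 2) and the fibration $q:B\mathcal P\left({\bf w}\right)\to BS^1=\mathbb {CP}^\infty$ with fiber $S^{2n+1}$. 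The first step is to identify $H^2\left(B\mathcal P\left({\bf w}\right);\mathbb Z\right)=\mathbb Z\left<x\right>$, where $x=q^\ast\left(\text{generator of }H^2\left(\mathbb {CP}^\infty\right)\right)$, from the Gysin sequence of $q$, the bundle $q$ being the unit sphere bundle of $\bigoplus_{j=0}^{n}L^{\otimes w_j}\to\mathbb {CP}^\infty$ with $L$ the universal line bundle. This same $x$ equals $c_1$ of the $U(1)$-bundle $\tilde M\to B\mathcal P\left({\bf w}\right)$ associated with $M$ as in the proof of Theorem \ref{mainthm}, since $\tilde M$ is classified precisely by $q$. I would normalize the two sides of the asserted isomorphism by the generator $u:=-\left\|{\bf w}\right\|\left[\omega\right]=\left\|{\bf w}\right\| c_1^{orb}\left(\mathcal O(1)\right)$ on $H^2\left(\mathbb P\left({\bf w}\right);\mathbb Q\right)$ (the normalization pinned down by the previous lemma, $\left[\omega\right]=-1/\left\|{\bf w}\right\|$) and by $x$ on $H^2\left(B\mathcal P\left({\bf w}\right);\mathbb Q\right)$.

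With these identifications the computation is short. From the curvature identity $\tilde\pi^\ast\left(p^\ast\omega\right)=\tilde p^\ast d\eta$ used in the proof of Theorem \ref{mainthm} one gets $\left[p^\ast\omega\right]=\pm c_1\left(\tilde M\right)=\pm x$, hence $p^\ast u=-\left\|{\bf w}\right\|\,p^\ast\left[\omega\right]=-\left\|{\bf w}\right\|\left[p^\ast\omega\right]=\pm\left\|{\bf w}\right\| x$, i.e.\ $p^\ast$ is multiplication by $\left\|{\bf w}\right\|$ up to the orientation of $x$. Alternatively, and more in the spirit of the previous lemma, one can argue through $f_{\bf w}$: the assignment $\left(z_0,\ldots,z_n\right)\mapsto\left(z_0^{w_0},\ldots,z_n^{w_n}\right)/\left\|\cdot\right\|$ is $S^1$-equivariant from $S^{2n+1}$ with the standard action to $S^{2n+1}$ with the weight action, so it induces $Bf_{\bf w}:\mathbb {CP}^n\to B\mathcal P\left({\bf w}\right)$ with $p\circ Bf_{\bf w}=f_{\bf w}$ and $q\circ Bf_{\bf w}$ the standard inclusion $\mathbb {CP}^n\hookrightarrow\mathbb {CP}^\infty$. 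The latter forces $\left(Bf_{\bf w}\right)^\ast x$ to be the generator $h$ of $H^2\left(\mathbb {CP}^n;\mathbb Q\right)$, and then $f_{\bf w}^\ast=\left(Bf_{\bf w}\right)^\ast\circ p^\ast$ reduces the lemma to checking $f_{\bf w}^\ast u=\left\|{\bf w}\right\| h$. Writing $f_{\bf w}^\ast u=\lambda h$, cupping $n$ times and integrating gives $\lambda^{n}=\left(\deg f_{\bf w}\right)\int_{\mathbb P\left({\bf w}\right)}u^{n}=\left\|{\bf w}\right\|\cdot\left\|{\bf w}\right\|^{\,n-1}=\left\|{\bf w}\right\|^{\,n}$, using $\deg f_{\bf w}=\left\|{\bf w}\right\|/\gcd\{w_j\}=\left\|{\bf w}\right\|$, the projection formula, and $\int_{\mathbb P\left({\bf w}\right)}c_1^{orb}\left(\mathcal O(1)\right)^{n}=1/\left\|{\bf w}\right\|$; hence $\lambda=\left\|{\bf w}\right\|$.

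I expect the genuine work to lie not in the formal manipulations but in three points that need care in the orbifold setting. First, the bookkeeping of normalizations: one must verify that the class called $S^2$ in the previous lemma is indeed dual to $\left\|{\bf w}\right\| c_1^{orb}\left(\mathcal O(1)\right)$, and that $c_1\left(\tilde M\right)$ really is the primitive generator $x$ and not a proper multiple — this last point is exactly the $H^2$ part of the Gysin computation above, and it is where the factor $\left\|{\bf w}\right\|$ ultimately has to surface. Second, the projection formula and the value $\int_{\mathbb P\left({\bf w}\right)}c_1^{orb}\left(\mathcal O(1)\right)^{n}=1/\left\|{\bf w}\right\|$ must be applied at the level of orbifold cohomology and orbifold fundamental classes; one cannot simply replace $\mathcal P\left({\bf w}\right)$ by the underlying variety $\mathbb P\left({\bf w}\right)$, since for non-well-formed or singular weights the integral generator of $H^2$ of the variety differs from $c_1^{orb}\left(\mathcal O(1)\right)$ by a factor and the naive degree count goes wrong. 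Third, one should check that the equivariant lift really descends to the asserted map of classifying spaces compatibly with $p$ and $q$; this is routine but uses the explicit orbifold charts of $\mathcal P\left({\bf w}\right)$ recalled above.
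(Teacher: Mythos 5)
Your proposal is correct, but it takes a genuinely different route from the paper's. The paper argues by reduction: it first adjoins a weight $1$ (passing to ${\bf w_1}=(1,w_0,\dots,w_n)$) and then restricts to a weighted projective line $\mathbb P(w_0,w_1)$, using the branched covers $f_{\bf w}$ to track the effect of these inclusions on $H_2$ and the induced maps $B\iota$ of classifying spaces (argued to be isomorphisms on $H^2_{orb}\cong\mathbb Z$ via the homotopy exact sequence of the Seifert fibration), so that everything collapses to the one-dimensional case $\mathcal P(1,w_1)$, where $\deg p_1=\text{lcm}(w_0,w_1)=w_1$ is the explicit picture from Section 2; the answer $\left\|{\bf w}\right\|$ then assembles from $\deg\iota_w=\left\|{\bf w}\right\|/w_1$. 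You instead compute globally: with $B\mathcal P\left({\bf w}\right)$ realized as the Borel construction, the Gysin sequence of the sphere bundle over $BS^1$ pins down the primitive generator $x=q^\ast t$ of $H^2\left(B\mathcal P\left({\bf w}\right);\mathbb Z\right)$, and the curvature identity already used in the proof of the main theorem gives $\left[p^\ast\omega\right]=\pm c_1(\tilde M)=\pm x$, so the factor $\left\|{\bf w}\right\|$ comes entirely from the normalization $\left[\omega\right]=-1/\left\|{\bf w}\right\|$ of the previous lemma. Your route buys a one-step argument, the ring $H^\ast\left(B\mathcal P\left({\bf w}\right);\mathbb Z\right)\cong\mathbb Z[t]/\left(\left\|{\bf w}\right\|t^{n+1}\right)$ as a by-product, and it sidesteps the paper's delicate bookkeeping of ``degrees'' of the inclusions and of the maps $B\iota^\ast$; the price is that you must justify identifying the Haefliger-type classifying space of the paper with the Borel construction (true, and consistent with the Boyer--Galicki framework the paper cites), and you inherit from the previous lemma the same normalization question --- whether the cycle $\iota_w\left(\mathbb P(w_0,w_1)\right)$ generates $H_2\left(\mathbb P\left({\bf w}\right);\mathbb Z\right)$ --- which the paper also leaves implicit and which you correctly flag. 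Your secondary argument via $f_{\bf w}$ and the top cup power is closer in spirit to the paper's use of $f_{\bf w}$, but still avoids the dimensional reduction; note only that it determines $\lambda$ from $\lambda^n=\left\|{\bf w}\right\|^n$ only up to sign, which must be fixed by orientation.
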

\begin{proof}
Let
$${\bf w_1}=\left(1,w_0,w_1,\cdots,w_n\right)$$
be another weight vector.
Consider the following diagram
$$\xymatrix{\mathbb P^n \ar[d]_{f_{\bf w}}\ar[rr]^\iota && \mathbb P^{n+1} \ar[d]^{f_{\bf w_1}}\\ \mathbb P\left({\bf w}\right) \ar[rr]_{\iota_w} && \mathbb P\left({\bf w_1}\right)}$$
where $$\iota,\iota_w:\left[z_0:z_1:\cdots:z_n\right]\mapsto\left[0:z_0:z_1:\cdots:z_n\right],$$
and $f_{\bf w}, f_{\bf w_1}$ are defined as in Lemma 4.2.
Because of
$$\deg\iota=1,~ \deg f_{\bf w}=\deg f_{\bf w_1}=\left\|{\bf w}\right\|$$ \cite[Remark 3.5]{M}, it follows that
$$\deg\iota_w=1.$$
Since $\iota$ is a good orbifold map \cite[Proposition 3.3]{M}, there is a map $B\iota$ between the classifying spaces induced by $\iota$ \cite[p.119]{BG}
$$\xymatrix{B\mathcal P\left({\bf w}\right) \ar[d]_{p}\ar[rr]^{B\iota} && B\mathcal P\left({\bf w_1}\right) \ar[d]^{p_{1}} \\ \mathcal P\left({\bf w}\right) \ar[rr]_{\iota_w} && \mathcal P\left({\bf w_1}\right).}$$

Meanwhile, we see the following exact sequence
$$\cdots\rightarrow\pi_{2}^{orb}(S^{2n+1})\rightarrow\pi_{2}^{orb}\left(\mathcal P\left({\bf w}\right)\right)\rightarrow\pi_{1}^{orb}\left(S^1\right)\rightarrow\pi_{1}^{orb}\left(S^{2n+1}\right)\rightarrow\cdots,$$
which means $$\pi_{2}^{orb}\left(\mathcal P\left({\bf w}\right)\right)=\mathbb Z.$$
Also, due to $\pi_{1}^{orb}\left(\mathcal P\left({\bf w}\right)\right)=0$,
we have
$$H_2^{orb}\left(\mathcal P\left({\bf w}\right)\right)= H_2^{orb}\left(\mathcal P\left({\bf w_1}\right)\right)=\mathbb Z,$$
and hence
$$H_{orb}^2\left(\mathcal P\left({\bf w}\right)\right)= H_{orb}^2\left(\mathcal P\left({\bf w_1}\right)\right)=\mathbb Z.$$

Let us regard the classifying space as a singular fibration \cite[p.117]{BG}. Then, it is clear that $B\iota$ is an embedding in the usual sense, and that the map $\alpha$ in the following diagram must be the identity map.
$$\xymatrix{\pi_{2}^{orb}\left(\mathcal P\left({\bf w}\right)\right) \ar[d]\ar[rr]^{B\iota_\ast} && \pi_{2}^{orb}\left(\mathcal P\left({\bf w_1}\right)\right) \ar[d] \\ \pi_1\left(S^1\right) \ar[rr]_{\alpha} && \pi_1\left(S^1\right)}$$
Consequently, $$B\iota^\ast : H_{orb}^2\left(\mathcal P\left({\bf w}\right)\right)\longrightarrow H_{orb}^2\left(\mathcal P\left({\bf w_1}\right)\right)$$ is the identity map.
Therefore $$\deg p_1=\deg p,$$
which allows us to assume that $w_0=1$.

Now, take a look at the following diagrams
$$\xymatrix{\mathbb P^2 \ar[d]_{f_{(w_0,w_1)}}\ar[rr]^\iota && \mathbb P^n \ar[d]^{f_{\bf w}} \\ \mathbb P\left(w_0,w_1\right) \ar[rr]_{\iota_w} && \mathbb P\left({\bf w}\right),}$$
where $$\iota,\iota_w:\left[z_0;z_1\right]\mapsto\left[z_0;z_1;0;\cdots;0\right],$$
and
$$\xymatrix{B\mathcal P\left(w_0,w_1\right) \ar[d]_{p_{1}}\ar[rr]^{B\iota} && B\mathcal P\left({\bf w}\right) \ar[d]^{p}\\ \mathcal P\left(w_0,w_1\right) \ar[rr]_{\iota_w} && \mathcal P({\bf w}).}$$
In this case, we have
$$\deg\iota=1,~\deg f_{\left(w_0,w_1\right)}=w_1\slash1=w_1,$$
and hence
$$\deg\iota_w=\left\|{\bf w}\right\|\slash w_1.$$
By the similar argument, we see that
$$B\iota^\ast : H_{orb}^2\left(\mathcal P\left({\bf w}\right)\right)\longrightarrow H_{orb}^2\left(\mathcal P\left(w_0,w_1\right)\right)$$ is the identity map.

On the other hand, we can show that
$$\deg p_1=\text{lcm}\left(w_0,w_1\right)=w_1,$$
by direct computations. Therefore, we have $$\deg p=\left\|{\bf w}\right\|.$$
\end{proof}

In order to figure out the orbifold canonical line bundle $\mathcal K^{orb}_{\mathcal O_{\mathcal P\left({\bf w}\right)}}$, we need to consider some line orbibundles $\mathcal O_{\mathcal P\left({\bf w}\right)}$ over $\mathcal P\left({\bf w}\right)$. The sheaves $\mathcal O_{\mathbb {CP}\left({\bf w}\right)}(m)$ on $\mathbb {CP}\left({\bf w}\right)$ generated by the graded $S\left({\bf w}\right)$-modules $S\left({\bf w}\right)(m)$ induce the orbisheaves $\mathcal O_{\mathcal P\left({\bf w}\right)}(m)$, which happen to be free hence line orbibundles and satisfy the property 
\begin{equation}\label{orbisheaves}
\mathcal O_{\mathcal P\left({\bf w}\right)}(k)\otimes\mathcal O_{\mathcal P\left({\bf w}\right)}(l)=\mathcal O_{\mathcal P\left({\bf w}\right)}(k+l), ~k,l\in\mathbb Z,
\end{equation}
none of which are true for $\mathcal O_{\mathbb {CP}\left({\bf w}\right)}(m)$ (refer to \cite[Theorem 4.5.4]{BG}) in general.

\begin{prop}\label{wpsprop}
$\mathcal P\left({\bf w}\right)$ satisfies the conditions in Theorem \ref{mainthm}. Also, we have
$$\mu_P\left(\mathcal P\left({\bf w}\right)\right)=2~\left|{\bf w}\right|.$$
\end{prop}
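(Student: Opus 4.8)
The plan is to verify the three hypotheses of Theorem~\ref{mainthm} for $\mathcal Z=\mathcal P(\mathbf w)$, with total space $M=S^{2n+1}$ (we take $n\ge1$) and symplectic form $\omega=d\eta_{\mathbf w}$, and then to identify the integer $b_{\mathcal Z}$ and read the index off the theorem. Hypothesis (iii) is immediate, since $M=S^{2n+1}$ is a smooth manifold. Hypothesis (ii) holds because $S^{2n+1}$ is simply connected, so the Remark following Theorem~\ref{mainthm} (applied with the simply connected manifold $M$), or equivalently the tail of the exact sequence \eqref{exactseq} with fibre $S^1$, gives $\pi_1^{orb}(\mathcal P(\mathbf w))=0$. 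All the substance is thus in hypothesis (i): I must produce an integer $b_{\mathcal Z}$ with $c_1^{orb}(T\mathcal P(\mathbf w))=-b_{\mathcal Z}[\omega]$ in $H^2(\mathcal P(\mathbf w),\mathbb Q)$ and show that $b_{\mathcal Z}=|\mathbf w|$. Since $H^2(\mathcal P(\mathbf w),\mathbb Q)\cong\mathbb Q$ is one-dimensional, it is enough to verify this after evaluation on the generator $S^2$ of $H_2(\mathbb P(\mathbf w),\mathbb Q)$ used in Lemma~4.2.

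Next I would compute $c_1^{orb}(T\mathcal P(\mathbf w))$ through the orbifold canonical bundle --- this is precisely what the good line orbibundles $\mathcal O_{\mathcal P(\mathbf w)}(m)$ and their tensor law \eqref{orbisheaves} are being set up for. Using the orbifold Euler sequence $0\to\mathcal O_{\mathcal P(\mathbf w)}\to\bigoplus_{j=0}^{n}\mathcal O_{\mathcal P(\mathbf w)}(w_j)\to T\mathcal P(\mathbf w)\to0$, equivalently the identification $\mathcal K^{orb}_{\mathcal P(\mathbf w)}=\mathcal O_{\mathcal P(\mathbf w)}(-|\mathbf w|)$, together with \eqref{orbisheaves}, one obtains
$$c_1^{orb}(T\mathcal P(\mathbf w))=\sum_{j=0}^{n}c_1^{orb}\!\bigl(\mathcal O_{\mathcal P(\mathbf w)}(w_j)\bigr)=|\mathbf w|\,c_1^{orb}\!\bigl(\mathcal O_{\mathcal P(\mathbf w)}(1)\bigr).$$
It then remains to identify $c_1^{orb}(\mathcal O_{\mathcal P(\mathbf w)}(1))$. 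For this I would use that $\pi:S^{2n+1}\to\mathcal P(\mathbf w)$ is the unit circle orbibundle of the tautological line orbibundle $\mathcal O_{\mathcal P(\mathbf w)}(-1)$ --- the higher-dimensional analogue of the $S^3\to\mathcal X$ example of Section~2 --- and that $\eta_{\mathbf w}$ is a connection on it with curvature $d\eta_{\mathbf w}=\pi^\ast\omega$; hence, by the same curvature computation as in the proof of Theorem~\ref{mainthm}, $c_1^{orb}(\mathcal O_{\mathcal P(\mathbf w)}(-1))=[\omega]$, so $c_1^{orb}(\mathcal O_{\mathcal P(\mathbf w)}(1))=-[\omega]$. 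Combining, $c_1^{orb}(T\mathcal P(\mathbf w))=-|\mathbf w|\,[\omega]$, so (i) holds with $b_{\mathcal Z}=|\mathbf w|$. As a consistency check, Lemma~4.2 gives $\langle[\omega],S^2\rangle=-1/\|\mathbf w\|$, hence $\langle c_1^{orb}(T\mathcal P(\mathbf w)),S^2\rangle=|\mathbf w|/\|\mathbf w\|$, which for $\mathbf w=(1,m)$ reproduces the orbifold Chern number $1+1/m$ of Section~2. Finally, Theorem~\ref{mainthm} applied to a principal orbit ($|\Gamma_q|=1$) gives $\mu_P(\mathcal P(\mathbf w))=2b_{\mathcal Z}=2|\mathbf w|$.

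The point requiring the most care is the orbifold bookkeeping in the middle step: one must establish the orbifold Euler sequence --- equivalently $\mathcal K^{orb}_{\mathcal P(\mathbf w)}=\mathcal O_{\mathcal P(\mathbf w)}(-|\mathbf w|)$ --- using the \emph{good} line orbibundles $\mathcal O_{\mathcal P(\mathbf w)}(m)$, for which \eqref{orbisheaves} holds, rather than the ill-behaved sheaves $\mathcal O_{\mathbb{CP}(\mathbf w)}(m)$ on the underlying variety; and one must fix the sign convention for $\mathcal O_{\mathcal P(\mathbf w)}(\pm1)$ so that the circle orbibundle $S^{2n+1}$ equipped with $\eta_{\mathbf w}$ is genuinely $\mathcal O_{\mathcal P(\mathbf w)}(-1)$ (which is consistent, since $\langle[\omega],S^2\rangle=-1/\|\mathbf w\|<0$ whereas $\mathcal O_{\mathcal P(\mathbf w)}(1)$ is the positive generator). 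Everything else --- the evaluation against $S^2$ and the passage through the classifying space $B\mathcal P(\mathbf w)$ --- is already packaged into Lemma~4.2, Lemma~4.3, and the proof of Theorem~\ref{mainthm}.
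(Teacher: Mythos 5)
Your proposal is correct and follows essentially the same route as the paper: the paper's proof likewise identifies the line orbibundle generated by $[\omega]$ with $\mathcal O_{\mathcal P(\mathbf w)}(-1)$, invokes $\mathcal K^{orb}_{\mathcal P(\mathbf w)}=\mathcal O_{\mathcal P(\mathbf w)}(-|\mathbf w|)$, and uses the tensor law \eqref{orbisheaves} to conclude $c_1^{orb}(T\mathcal P(\mathbf w))=-|\mathbf w|\,[\omega]$, hence $b_{\mathcal P(\mathbf w)}=|\mathbf w|$. Your explicit verification of hypotheses (ii) and (iii) and the consistency check against Lemma~4.2 are sound additions that the paper leaves implicit (condition (ii) is checked elsewhere, in the proof of Lemma~4.3, via the exact sequence \eqref{exactseq}).
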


\begin{proof}
Since the line orbibundle generated by $\left[\omega\right]$ in $\mathcal P\left({\bf w}\right)$ is actually $\mathcal O_{\mathcal P\left({\bf w}\right)}(-1)$, it is sufficient to compare $\mathcal O_{\mathcal P\left({\bf w}\right)}(-1)$ with $\mathcal K^{orb}_{\mathcal P\left({\bf w}\right)}$. Plus, it is known that the canonical divisor $K_{\mathbb P\left({\bf w}\right)}$ of the base space, which induces the canonical orbidivisor $\mathcal K^{orb}_{\mathcal P\left({\bf w}\right)}$, is equal to $\mathcal O_{\mathbb P\left({\bf w}\right)}\left(-|{\bf w}|\right)$. Because of (\ref{orbisheaves}), it is clear that
$$c_1^{orb}\left(\mathcal P\left({\bf w}\right)\right)=-\left|{\bf w}\right|\times c_1^{orb}\left(\mathcal O_{\mathcal P\left({\bf w}\right)}\left(-1\right)\right)=-b_{\mathcal P\left({\bf w}\right)}c_1^{orb}\left(\mathcal O_{\mathcal P\left({\bf w}\right)}\left(-1\right)\right)=-b_{\mathcal P\left({\bf w}\right)}\left[\omega\right].$$ Therefore, $b_{\mathcal P\left({\bf w}\right)}=\left|{\bf w}\right|$.
\end{proof}

Any quasi-smooth weighted complete intersection $X$ in $\mathbb P\left({\bf w}\right)$ has an orbifold structure naturally induced by $\mathcal P\left({\bf w}\right)$ (\cite[Proposition 4.6.6]{BG}). Denote $X$ with this orbifold structure by $\mathcal X$.

Also, it is known that the link $$L_X=S^{2n+1}\cap CX$$ with $\dim\geq2$ is simply connected \cite[3.2.12]{D}. Since the locally free action of $S^1$ on $L_X$ produces the orbifold $\mathcal X$, and $L_X$ is a manifold in this case, we see that
$$\pi_1^{orb}(\mathcal X)=0,$$ by the exact sequence (\ref{exactseq}).

Clearly, $\mathcal X$ is a symplectic orbifold with the inherited symplectic form $\iota^\ast\omega$ from $\mathcal P({\bf w})$.

\begin{prop}
Let $\mathcal X$ be a quasi-smooth weighted complete intersection of $\mathcal P({\bf w})$, whose degree is $$\left(m_1,m_2,\cdots m_r\right)$$
with $1\leq r\leq n-2.$ Then, we have
$$\mu_{P}\left(\mathcal X\right)=2\left(\left|{\bf w}\right|-\sum_{j=1}^rm_j\right).$$
\end{prop}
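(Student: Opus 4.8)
The plan is to check that $\mathcal X$ satisfies the three hypotheses of Theorem \ref{mainthm} and then to identify the constant $b_{\mathcal X}$. Conditions (ii) and (iii) are already settled in the paragraphs preceding the statement: $L_X$ is a manifold because $X$ is quasi-smooth, it is simply connected because $\dim_{\mathbb C}\mathcal X=n-r\ge 2$ (this is exactly where $r\le n-2$ is used, via \cite[3.2.12]{D}), and the exact sequence \eqref{exactseq} then gives $\pi_1^{orb}(\mathcal X)=0$; moreover $\mathcal X$ is a Hodge orbifold with the inherited symplectic form $\iota^\ast\omega$, where $\iota:\mathcal X\hookrightarrow\mathcal P({\bf w})$ is the inclusion. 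So the whole matter reduces to computing $c_1^{orb}(T\mathcal X)$ and expressing it as a multiple of $[\iota^\ast\omega]$.

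For that I would invoke the orbifold adjunction formula for quasi-smooth weighted complete intersections. A quasi-smooth hypersurface of degree $m$ in $\mathcal P({\bf w})$ has normal orbibundle the restriction of $\mathcal O_{\mathcal P({\bf w})}(m)$, so the normal orbibundle of $\mathcal X$ is $\bigoplus_{j=1}^{r}\iota^\ast\mathcal O_{\mathcal P({\bf w})}(m_j)$, and the short exact sequence $0\to T\mathcal X\to\iota^\ast T\mathcal P({\bf w})\to N_{\mathcal X}\to 0$ of orbibundles, combined with $\mathcal K^{orb}_{\mathcal P({\bf w})}=\mathcal O_{\mathcal P({\bf w})}(-|{\bf w}|)$ (from the proof of Proposition \ref{wpsprop}) and the multiplicativity \eqref{orbisheaves}, yields
$$\mathcal K^{orb}_{\mathcal X}=\iota^\ast\mathcal O_{\mathcal P({\bf w})}\!\left(\sum_{j=1}^{r}m_j-|{\bf w}|\right).$$
Consequently $c_1^{orb}(T\mathcal X)=\bigl(|{\bf w}|-\sum_{j=1}^{r}m_j\bigr)\,c_1^{orb}\!\left(\iota^\ast\mathcal O_{\mathcal P({\bf w})}(1)\right)$.

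To conclude, recall (again from the proof of Proposition \ref{wpsprop}) that $[\omega]$ is the orbifold first Chern class of $\mathcal O_{\mathcal P({\bf w})}(-1)$, hence $[\iota^\ast\omega]=c_1^{orb}\!\left(\iota^\ast\mathcal O_{\mathcal P({\bf w})}(-1)\right)$, and the last display becomes
$$c_1^{orb}(T\mathcal X)=-\Bigl(|{\bf w}|-\sum_{j=1}^{r}m_j\Bigr)[\iota^\ast\omega].$$
Thus hypothesis (i) of Theorem \ref{mainthm} holds with the integer $b_{\mathcal X}=|{\bf w}|-\sum_{j=1}^{r}m_j$; this integer is the genuine proportionality constant because $[\iota^\ast\omega]\ne 0$ in $H^2(\mathcal X,\mathbb Q)$ (it is a symplectic class on the compact positive-dimensional orbifold $\mathcal X$). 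Applying Theorem \ref{mainthm} to a principal orbit, where $|\Gamma_q|=1$, gives $\mu_P(\mathcal X)=2b_{\mathcal X}=2\bigl(|{\bf w}|-\sum_{j=1}^{r}m_j\bigr)$.

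The step I expect to be the main obstacle is the careful justification of the orbifold adjunction formula in the \emph{orbifold} category: one must verify, chart by chart with respect to the uniformizing groups $\mu_{w_j}$, that the normal orbibundle of a quasi-smooth degree-$m$ hypersurface is genuinely $\iota^\ast\mathcal O_{\mathcal P({\bf w})}(m)$ (the analogous statement for the sheaves $\mathcal O_{\mathbb{CP}({\bf w})}(m)$ on the coarse space fails, since these do not even obey \eqref{orbisheaves}), and that the conormal/tangent sequence above is exact as a sequence of orbibundles. Once this structural input is in place, the rest is bookkeeping with \eqref{orbisheaves} and the value $\mathcal K^{orb}_{\mathcal P({\bf w})}=\mathcal O_{\mathcal P({\bf w})}(-|{\bf w}|)$ already recorded.
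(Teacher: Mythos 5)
Your proposal is correct and follows the same overall strategy as the paper: verify hypotheses (ii) and (iii) of Theorem \ref{mainthm} from quasi-smoothness, the simple connectivity of the link $L_X$ for $\dim\geq 2$, and the exact sequence \eqref{exactseq}, then reduce everything to showing $\mathcal K^{orb}_{\mathcal X}=\mathcal O_{\mathcal X}\bigl(\sum_{j=1}^r m_j-|{\bf w}|\bigr)$ and matching $[\iota^\ast\omega]$ with $c_1^{orb}(\mathcal O_{\mathcal X}(-1))$. Where you diverge is in how that canonical bundle formula is justified. You derive it from the orbifold adjunction sequence $0\to T\mathcal X\to\iota^\ast T\mathcal P({\bf w})\to\bigoplus_j\iota^\ast\mathcal O_{\mathcal P({\bf w})}(m_j)\to 0$, which is self-contained but forces you to verify the normal orbibundle identification chart by chart --- the obstacle you correctly flag. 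The paper instead imports Dolgachev's coarse-space formula $K_X=\mathcal O_X\bigl(\sum_j m_j-|{\bf w}|\bigr)$ and spends its effort on the complementary structural point: constructing $\mathcal O_{\mathcal X}(m)$ as an invertible orbisheaf on the uniformizing charts $W_j=C_X\cap\tilde U_j$ (generated by $(\sqrt[w_j]{z_j})^m$), establishing the multiplicativity $\mathcal O_{\mathcal X}(m)\otimes\mathcal O_{\mathcal X}(l)=\mathcal O_{\mathcal X}(m+l)$, and thereby lifting the coarse-space divisor to the orbifold canonical divisor. That lift is not automatic --- the paper's own $\mathbb P(1,m)$ example in Section 2 shows $K_X$ and $\mathcal K^{orb}_{\mathcal X}$ genuinely differ in general --- so whichever route you take, some chart-level verification is unavoidable; your version puts it in the normal bundle, the paper's puts it in the orbisheaves $\mathcal O_{\mathcal X}(m)$. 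Either way the bookkeeping yields $b_{\mathcal X}=|{\bf w}|-\sum_{j=1}^r m_j$ and the stated index.
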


\begin{proof}
Owing to the condition $r\leq n-2$, we have $\dim X\geq 2,$ and hence $\pi_1^{orb}(\mathcal X)=0$.
Consider the injective map induced by the embedding
$$\iota^\ast:H^2\left(\mathbb P\left({\bf w}\right),\mathbb Q\right)\cong\mathbb Q\longrightarrow H^2\left(X,\mathbb Q\right)$$ \cite[(B22)Theorem]{D}.

Note that the local uniformizing chart $\tilde U_j$ of $\mathcal P\left({\bf w}\right)$ is equal to $Spec\left(k\left[z_0,\cdots,\widehat{z_j},\cdots,z_n\right]\right)$, ($k=\mathbb C$), and that $U_j=Spec\left(k\left[z_0,\cdots,z_n\right]_{\left(z_j\right)}\right)$ is the quotient space $\tilde U_j\slash\mathbb Z_{w_j}$ (\cite[Theorem 3A.1.]{BR}). Also $\mathcal O_{\mathcal P\left({\bf w}\right)}(m)|_{\tilde U_j}$ is generated by $\left(\sqrt[w_j]{z_j}\right)^m$ over $\mathcal O_{\tilde U_j}$ (\cite[Theorem 4.5.4]{BG}), and $W_j=C_X\cap\tilde U_j$ is a smooth affine variety for the affine quasicone $C_X$ of $X$ (\cite[\S3.1]{Dol}). 

Consequently, we may use $W_j$ as a local uniformizing chart for the orbifold $\mathcal X$, on which $\mathcal O_{\mathcal X}(m)|_{W_j}$ is equal to $\mathcal O_{\mathcal X}|_{W_j}\otimes_{\mathcal O_{\tilde U_j}}\mathcal O_{\mathcal P\left({\bf w}\right)}(m)|_{\tilde U_j}$, a sheaf of module generated by  $\left(\sqrt[w_j]{z_j}\right)^m$ over $\mathcal O_{\mathcal X}|_{W_j}$. Therefore, 
 $\mathcal O_{\mathcal X}(m)$ is an invertible orbisheaf and $$\mathcal O_{\mathcal X}(m)\otimes\mathcal O_{\mathcal X}(l)=\mathcal O_{\mathcal X}(m+l)$$
still holds. 
Because it is now clear that the line bundle generated by $\left[\iota^{\ast}\omega\right]$ is equal to $\mathcal O_{\mathcal X}(-1)$, and also we know that $K_{X}=\mathcal O_{X}\left(\sum_{j=1}^rm_j-|{\bf w}|\right)$ (\cite[3.3.4 Theorem]{Dol}), we may let
$c^{orb}_1\left(-\mathcal K^{orb}_{\mathcal X}\right)=-b_{\mathcal Z}\left[\iota^\ast\omega\right]=-\iota^\ast\left(b_{\mathcal Z}\left[\omega\right]\right),$
so that
$$-b_{\mathcal Z}\left[\omega\right]=-b_{\mathcal Z}c^{orb}_1\left(\mathcal O_{\mathcal P\left(\bf w\right)}\left(-1\right)\right)=c^{orb}_1\left(\mathcal O_{\mathcal P\left(\bf w\right)}\left(\left|\bf w\right|-\sum_{j=1}^rm_j\right)\right),$$
which leads us to the conclusion.
\end{proof}

Now, turn our attention to the Brieskorn polynomial of the exponent vector
$${\bf a}=\left(a_0,\cdots,a_n\right), n\geq3.$$
i.e.
$$\sum_{j=0}^n z_j^{a_j}=0.$$
Write $l=\text{lcm}_j\{a_j\}$.
Then, we may regard it as a quasi-smooth weighted complete intersection $\mathcal X$ of a single weighted homogeneous polynomial of degree $l$ in the weighted projective space with weights $w_j=l \slash a_j$, and hence it has the orbifold structure induced by the standard weighted projective space (refer to \cite[example 4.6.7]{BG}).

Here, let us define $l_2$ in the following way.
First, let $l=p_1^{s_1}\cdots p_t^{s_t}$
be the prime factorization of $l$.
For each prime $p_j$, choose $a_{j_p}$ whose $\beta_j=\text{ord}_{p}(a_{j_p})$
is the second largest among $\text{ord}_{p}a_{\alpha}$'s. Put
$l_2 = p_1^{\beta_1}\cdots p_t^{\beta_t}$.

\begin{Rmk}
For ${\bf a}=\left(a,a,\cdots,a\right) (n\geq2)$ for example, we have $l=l_2=a$.
If ${\bf a}=\left(a_0,\cdots,a_n\right)$ consists of pairwise relatively prime integers, then it is clear that
$$l=a_0\times\cdots\times a_n, \qquad l_2=1.$$
More concretely, for ${\bf a}=\left(p,p^2,p^3\right)$ with $p$ prime, we have $l=p^3,~l_2=p^2$.
\end{Rmk}

\begin{lem}
Under the above assumption, we have
$$a_{\bf w}=l\slash l_2,$$
and hence $\mathcal X$ is well-formed iff $l=l_2$.
\end{lem}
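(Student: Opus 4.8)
The plan is to reduce the claim to a prime‑by‑prime comparison of $p$‑adic valuations, since $a_{\bf w}$, $l$, and $l_2$ are all positive integers and $l_2 \mid l$ by construction. Fix a prime $p \mid l$ and set $s = \mathrm{ord}_p(l)$. Since $l = \mathrm{lcm}_j\{a_j\}$ we have $s = \max_j \mathrm{ord}_p(a_j)$, and by the definition of $l_2$ the exponent $\beta := \mathrm{ord}_p(l_2)$ is the \emph{second largest} entry of the multiset $\{\mathrm{ord}_p(a_0),\dots,\mathrm{ord}_p(a_n)\}$; in particular $\beta = s$ exactly when the maximum $s$ is attained at least twice. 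Because $w_j = l/a_j$, we get $\mathrm{ord}_p(w_j) = s - \mathrm{ord}_p(a_j)$, so a smaller $p$‑valuation of $a_j$ corresponds to a larger $p$‑valuation of $w_j$.

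Next I would unwind the definitions of $d_j$ and $a_{\bf w}$ in terms of these valuations. From $d_j = \gcd\bigl(w_0,\dots,\widehat{w_j},\dots,w_n\bigr)$ we obtain
\[
\mathrm{ord}_p(d_j) = \min_{i \neq j} \mathrm{ord}_p(w_i) = s - \max_{i \neq j} \mathrm{ord}_p(a_i),
\]
and therefore, since $a_{\bf w} = \mathrm{lcm}(d_0,\dots,d_n)$,
\[
\mathrm{ord}_p(a_{\bf w}) = \max_j \mathrm{ord}_p(d_j) = s - \min_j \max_{i \neq j} \mathrm{ord}_p(a_i).
\]
A brief case analysis then identifies the inner quantity with $\beta$: if the maximum $s$ of the $\mathrm{ord}_p(a_i)$ is attained at a unique index $j_0$, then $\max_{i \neq j_0}\mathrm{ord}_p(a_i) = \beta$ while $\max_{i \neq j}\mathrm{ord}_p(a_i) = s$ for every $j \neq j_0$; if it is attained at two or more indices, then $\max_{i \neq j}\mathrm{ord}_p(a_i) = s = \beta$ for every $j$. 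Either way $\min_j \max_{i \neq j}\mathrm{ord}_p(a_i) = \beta$, so $\mathrm{ord}_p(a_{\bf w}) = s - \beta = \mathrm{ord}_p(l) - \mathrm{ord}_p(l_2)$. As this holds for each prime $p \mid l$, and both $a_{\bf w}$ and $l/l_2$ are trivial at the remaining primes, we conclude $a_{\bf w} = l/l_2$. (Incidentally the same computation gives $\mathrm{ord}_p(\gcd_i w_i) = s - \max_i \mathrm{ord}_p(a_i) = 0$, confirming the standing hypothesis $\gcd(w_0,\dots,w_n) = 1$.)

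For the final assertion, recall that $\mathcal X$ — equivalently its ambient weighted projective space $\mathcal P({\bf w})$ — is well‑formed precisely when $a_{\bf w} = 1$. Since $l_2 \mid l$, the identity $a_{\bf w} = l/l_2$ shows $a_{\bf w} = 1$ if and only if $l = l_2$, which is the claim.

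The only genuine subtlety I anticipate is the bookkeeping around what "second largest" means when the top $p$‑valuation occurs with multiplicity; the two‑case split above is arranged precisely to absorb that, and once it is in place the rest is a mechanical translation of $\gcd$ and $\mathrm{lcm}$ into the valuations of $w_j = l/a_j$.
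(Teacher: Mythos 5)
Your proof is correct and follows essentially the same route as the paper: a prime-by-prime comparison of $p$-adic valuations using $\mathrm{ord}_p(w_j)=\mathrm{ord}_p(l)-\mathrm{ord}_p(a_j)$ and the translation of $\gcd$ and $\mathrm{lcm}$ into $\min$ and $\max$. In fact you are slightly more careful than the paper, which tacitly assumes the maximal valuation $s_j$ is attained at a unique index $\alpha_j$, whereas your two-case analysis explicitly absorbs the tie case (where $\beta=s$ and $\mathrm{ord}_p(a_{\bf w})=0$).
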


\begin{proof}
Note that each $w_j$ divides $l$ and so do all $d_j$'s, which leads us to $a_{\bf w} | l$.
In order to get $a_{\bf w}$, we need the biggest $\text{ord}_{p}$ among $d_j$'s, for each prime factor $p$ of $l$.
For each $p_j$, let $\alpha_j$ be the index where
$\text{ord}_{p_j}\left(a_{\alpha_j}\right)=s_j$.
Then $p_j$ does not divide $w_{\alpha_j}$, and hence $d_{\alpha}$ is indivisible by $p_j$ for all $\alpha$ different from $\alpha_j$.
Among $w_{\alpha}$ with $\alpha\neq\alpha_j$, $s_j-\beta_j$ is the least one of $\text{ord}_{p}\left(w_{\alpha}\right).$ i.e. $$\text{ord}_{p}\left(d_{\alpha_j}\right)=s_j-\beta_j,$$ and hence
$$\text{ord}_{p}\left(a_{\bf w}\right)=s_j-\beta_j.$$

Because of $l=l_2$, the ambient projective space is well-formed and so is its complete intersection of the Brieskorn hypersurface by the criterion introduced in Corollary 4.6.10 in \cite{BG}.
\end{proof}

\begin{cor}
Consider the Brieskorn orbifold with the exponent vector
$${\bf a}=\left(a_0,\cdots,a_n\right), n\geq3$$
as a weighted hypersurface in the weighted projective space with weights $\left\{l\slash a_j \right\}$. Then the Conley-Zehnder index of the principal orbit is
$$\mu_P=2~l\left(\sum_{j=0}^n\frac{1}{a_j}-1\right).$$
\end{cor}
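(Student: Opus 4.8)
The plan is to apply the preceding Proposition on quasi-smooth weighted complete intersections with a single defining equation, i.e.\ $r=1$. As recalled above (\cite[Example 4.6.7]{BG}), setting $l=\text{lcm}_j\{a_j\}$ and $w_j=l/a_j$, the Brieskorn equation $\sum_{j=0}^n z_j^{a_j}=0$ is weighted homogeneous of degree $l$ (indeed $w_ja_j=l$ for every $j$) and quasi-smooth (its partials $a_jz_j^{a_j-1}$ vanish simultaneously only at the origin), so it presents $\mathcal X$ as a quasi-smooth weighted hypersurface in $\mathcal P({\bf w})$ with ${\bf w}=(l/a_0,\dots,l/a_n)$ and degree $(m_1)=(l)$; and $\gcd(w_0,\dots,w_n)=1$ is automatic, since for each prime $p\mid l$ we have $\min_j\text{ord}_p(w_j)=\text{ord}_p(l)-\max_j\text{ord}_p(a_j)=0$ as $l=\text{lcm}_j a_j$.

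Next I would check the numerical hypothesis of that Proposition: for $r=1$ the bound $1\le r\le n-2$ reads precisely $n\ge3$, which is assumed, and then $\dim X=n-1\ge2$, so the link $L_X=S^{2n+1}\cap CX$ is a simply connected manifold and $\pi_1^{orb}(\mathcal X)=0$ — the facts used inside the proof of that Proposition. It therefore gives $\mu_P(\mathcal X)=2\bigl(|{\bf w}|-m_1\bigr)$, and substituting $|{\bf w}|=\sum_{j=0}^n w_j=\sum_{j=0}^n l/a_j=l\sum_{j=0}^n 1/a_j$ and $m_1=l$ produces
$$\mu_P=2\Bigl(l\sum_{j=0}^n\frac1{a_j}-l\Bigr)=2\,l\Bigl(\sum_{j=0}^n\frac1{a_j}-1\Bigr),$$
which is exactly the formula~(\ref{OldBrieskorn}) recalled in the introduction.

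The step I expect to be the main obstacle is the well-formedness issue that the preceding Lemma is tailored to control. The proof of the Proposition identifies the orbifold canonical sheaf $\mathcal K^{orb}_{\mathcal X}$ with the coarse canonical sheaf $K_X=\mathcal O_X\bigl(\sum m_j-|{\bf w}|\bigr)$ via \cite{Dol}, and this identification is legitimate precisely when the complete intersection is well-formed; by the preceding Lemma this happens for ${\bf w}=(l/a_j)$ exactly when $a_{\bf w}=1$, i.e.\ when $l=l_2$. If $l\ne l_2$, then some $d_j=\gcd_{i\ne j}w_i$ exceeds $1$, the hypersurface $\mathcal X$ meets the codimension-one orbifold locus $\{z_j=0\}$ of isotropy $\mathbb Z_{d_j}$, and $\mathcal K^{orb}_{\mathcal X}$ then no longer coincides with $K_X$ but must be corrected by the ramification divisors along those $\{z_j=0\}$ (the higher-dimensional analogue of the $\mathbb{CP}^1$ discussion in Section~2). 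In that case the plan is to pass to the well-formed model $\mathcal P({\bf\bar w})$ through the isomorphism $\mathbb P({\bf w})\cong\mathbb P({\bf\bar w})$, using the preceding Lemma together with the criterion \cite[Corollary 4.6.10]{BG} to certify well-formedness of the reduced ambient space and of the reduced Brieskorn hypersurface, and then to verify that this reduction alters neither the orbifold $\mathcal X=L_X/S^1$, nor the contact form on $L_X$ — hence not $\mu_P$ — nor the integer $|{\bf w}|-l$, so that the Proposition applies verbatim in the reduced picture. Alternatively one can keep the given presentation and drag the ramification correction through the orbisheaf computation in the proof of the Proposition, checking that the conclusion $b_{\mathcal Z}=|{\bf w}|-l$ is unaffected. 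Making this cancellation explicit is the delicate point.
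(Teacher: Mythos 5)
Your proposal is correct and takes essentially the same route as the paper: the paper's entire proof is the one-line remark that the formula ``follows directly from the preceding description,'' i.e.\ from applying the complete-intersection Proposition with $r=1$, $m_1=l$, and $\left|{\bf w}\right|=l\sum_{j=0}^n 1/a_j$, exactly as in your first two paragraphs. Your extended discussion of the well-formedness subtlety (the case $l\neq l_2$ and the identification of $\mathcal K^{orb}_{\mathcal X}$ with $K_X$) is a reasonable caution but goes beyond the paper, which invokes the Proposition without addressing that point.
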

\begin{proof}
It follows directly from the preceding description.
\end{proof}
\begin{Rmk}
This value matches (\ref{OldBrieskorn}) although their paths of symplectomorphisms are not defined exactly in the same manner.
\end{Rmk}

\section{Some computations for non-principal orbits}
In this section, we will see how Theorem \ref{mainthm} works for non-principal orbits as well by tackling the examples in the previous section. To begin with, we need to ponder over the Conley-Zehnder index a bit more for a path of unitary matrices. In the simplest case, the path is nothing but complex-valued with modulus 1 and the index goes as follows:
$$\mu_{CZ}\left(e^{\pi it}|_{t\in[0,T]}\right)=\begin{cases}T&\text{if }T\in 2\mathbb Z,\\
2\left\lfloor T\slash2\right\rfloor+1&\text{otherwise.}\end{cases}$$
At times, the product property
$$\mu_{CZ}\left(\Psi^\prime\oplus\Psi^{\prime\prime}\right)=\mu_{CZ}\left(\Psi^\prime\right)+\mu_{CZ}\left(\Psi^{\prime\prime}\right)$$
helps us with higher dimensional cases (refer to \cite[\S2.4]{S}).

Let us begin with the one-dimensional weighted projective space $\mathcal P\left(m,n\right)$ ($m,n$ are relatively prime). Consider the non-principal orbit
$$\gamma(t)=\left(e^{2\pi it},0\right),t\in[0,1],$$
in $\mathcal P(m,n)$. Clearly, it corresponds to the point $x$ whose isotropy group is $\Gamma_x=\mathbb Z_m$.
By Theorem \ref{mainthm}, $\mu_{CZ}(m\cdot\gamma)=2(m+n)$, and it is obvious that 
$$\mu_{CZ}(\gamma)=2\left\lfloor\frac{m+n}{2m}\right\rfloor+1,$$
because it is a path of one-dimensional unitary matrices.

Now, move on to the higher dimensional weighted projective space $\mathcal P\left({\bf w}\right)$ with weights ${\bf w}=\left(w_0,w_1,\cdots,w_n\right)$, and with Reeb vector field $\xi_{\bf w}=2\pi i\sum_{j=0}^{n} w_j\left( z_j\frac{\partial}{\partial z_j}-\bar z_j\frac{\partial}{\partial \bar z_j}\right)$. Since $\mathcal P\left({\bf w}\right)$ has a compatible hermitian metric, the action by $\xi_{\bf w}$ along a principal orbit can be represented as a loop of unitary matrices $A(t)$. Therefore, if we consider another vector field $\xi_{\bf w_0}=2\pi i\sum_{j=0}^{n-1} w_j\left( z_j\frac{\partial}{\partial z_j}-\bar z_j\frac{\partial}{\partial \bar z_j}\right)$ along the same orbit in $\mathcal P\left({\bf w}\right)$, then $A$ can be written as
$$A(t)=\begin{bmatrix}A_0(t)&\ast\\0&\varphi(t)\end{bmatrix},$$ where $A_0(t)$ is generated by $\xi_{\bf w_0}$ and $\varphi(t)$ is a $\mathbb C$-valued function. Also, because $A(t), A_0(t)$ are unitary matrices, $$1=\left|\det A(t)\right|=\left|\det A_0(t)\right|\left|\varphi(t)\right|=\left|\varphi(t)\right|.$$

Consider the non-principal orbit $\gamma_0=\left(e^{2\pi w_0it}z_0,\cdots,e^{2\pi w_{n-1}it}z_{n-1},0\right)$ in $\mathcal P\left({\bf w}\right)$ with nonzero $z_{j}$'s, which corresponds to the point whose isotropy group is $\mathbb Z_{d_n}$, where $d_n=\gcd\left(w_0,w_1,\cdots,w_{n-1}\right)$. By applying Proposition \ref{wpsprop} to $\mathcal P\left({\bf w}\right)$ and to one-dimensional lower weighted projective space $\mathcal P\left({\bf w}_0\right)$ with weights ${\bf w_0}=\left(w_0,w_1,\cdots,w_{n-1}\right)\slash d_n$, we know that $\varphi(t)$ contributes twice as much as $\left|{\bf w}\right|-d_n\cdot\left|{\bf w}_0\right|=w_n$ to the Conley-Zehnder index in $\mathcal P\left({\bf w}\right)$ along $\gamma_0$ traveling $d_n$-times repeatedly. Therefore, we get
$$\mu_{CZ}\left(\gamma_0\right)=\frac{2}{d_n}\sum_{j=0}^{n-1}w_j+2\left\lfloor \frac{w_n}{2d_n}\right\rfloor+1.$$
By using this method repeatedly, we can get the Conley-Zehnder index for any non-principal orbits in the weighted projective spaces. For example, consider 
$\gamma_0=\left(e^{2\pi it},e^{2\pi it},0,0\right),$ $t\in\left[0,1\right]$, in $\mathcal P(4,4,5,14)$. With an iteration of the above operation, we get
$$\mu_{CZ}\left(\gamma_0\right)=2\cdot(1+1)+(2\cdot\left\lfloor\frac{5}{2\cdot4}\right\rfloor+1)+(2\cdot\left\lfloor\frac{7}{2\cdot2}\right\rfloor+1)=8.$$
after all.

It's even possible to apply this method for the Brieskorn cases. Consider
$\gamma_0=\left(e^{\pi it}z_0,e^{\pi it}z_1,e^{\pi it}z_2,0\right),$ $t\in\left[0,1\right]$ in $z_0^{2}+z_1^{2}+z_2^{2}+z_3^{5}=0$, ($z_0z_1z_2\neq0$). Note that it is a quasi-smooth hypersurface with degree $d=\text{lcm}(2,2,2,5)=10$ in $\mathcal P(5,5,5,2)$, and $\gamma_0$ corresponds to a point with an isotropy group of order 5 when counting it with the induced orbifold structure. Therefore, if we make use of the similar method as above, we get
$$\mu_{CZ}\left(\gamma_0\right)=2\cdot(1+1+1-2)+(2\cdot\left\lfloor\frac{2}{2\cdot5}\right\rfloor+1)=3,$$
because of $\mu_{P}=2\cdot\left(5+5+5+2-10\right).$

\begin{Rmk}
We need to be careful in applying this method to non-principal orbits in general orbifolds, because we have implicitly used the fact that in $\mathbb C^n$, the subspaces $\{z_n=0\}$ and $\{z_1=\cdots=z_{n-1}=0\}$ are unitarily complementary to each other so that they are not only orthogonal but also symplectic complement of each other. Apparently, this is not always true for general orbifold strata.
\end{Rmk}

\bigskip


\bigskip


\begin{thebibliography}{plain}

\bibitem[1]{O3} van Koert, O.: Open books for contact five-manifolds and applications of contact homology. 
Inaugural-Dissertation zur Erlangung des Doktorgrades der Mathematisch-Naturwissenschaftlichen Fakultät der Universität zur Köln. 
Available at \href{http://kups.ub.uni-koeln.de/1540/}{http://kups.ub.uni-koeln.de/1540/} (2005)
\bibitem[2]{O2} Kwon,M., vanKoert,O.: Brieskornmanifolds in contact topology. Bull. Lond.Math. Soc. 48(2), 173-241 (2016)

\bibitem[3]{S} Salamon, D.: Lectures on Floer homology. In: Symplectic Geometry and Topology (Park City, UT, 1997),
pp. 143-229. IAS/Park CityMathematics Series, vol. 7. AmericanMathematical Society, Providence, RI (1999)

\bibitem[4]{O1} van Koert, O.: Simple Computations in Prequantization Bundles. 
Available at \href{http://www.math.snu.ac.kr/~okoert/tools/CZ_index_BW_bundle.pdf}{http://www.math.snu.ac.kr/{\textasciitilde}okoert/tools/CZ$\_$index$\_$BW$\_$bundle.pdf}
or at \href{http://www.math.snu.ac.kr/~okoert/}{http://www.math.snu.ac.kr/{\textasciitilde}okoert/}  
(accessed August 20, 2015)

\bibitem[5]{ARL} Adem, A., Ruan, Y., Leida, J.: Orbifolds and Stringy Topology. Cambridge Tracts in Mathematics (2007)

\bibitem[6]{BG} Boyer, C., Galicki, K.: Sasakian Geometry. Oxford Mathematical Monographs (2008)

\bibitem[7]{MS} McDuff D., Salamon D.: Introduction to Symplectic Topology, 2nd edn. Oxford Mathematical Mono503
graphs(1999)

\bibitem[8]{M} Mann, E.: Orbifold quantum cohomology of weighted projective spaces. J. Algebr. Geom. 17, 137-166
(2008)

\bibitem[9]{BR} Beltrametti, M., Robbiano, L.: Introduction to the theory of weighted projective spaces. Exop. Math. 4,
111-162 (1986)

\bibitem[10]{D} Dimca, A.: Singularities and Topology of Hypersurfaces. Springer (1992)

\bibitem[11]{Dol} Dolgachev, I.:Weighted projective varieties. In:GroupActions andVector Fields (Vancouver,B.C., 1981),

\end{thebibliography}
\end{document}